\newcommand{\id}{\operatorname{id}} 
\newcommand{\Aut}{\operatorname{Aut}}
\newcommand{\Span}{\operatorname{Span}}
 \newcommand{\supp}{\operatorname{supp}}
\newcommand{\alg}{\operatorname{alg}}
   \theoremstyle{plain}
   \newtheorem{thm}{Theorem}[section]
   \newtheorem{prop}[thm]{Proposition}
   \newtheorem{lemma}[thm]{Lemma}  
   \newtheorem{cor}[thm]{Corollary}
   \theoremstyle{definition}
   \newtheorem{defn}[thm]{Definition}
   \newtheorem{example}[thm]{Example}
   \theoremstyle{remark}
   \newtheorem{remark}[thm]{Remark}
   \numberwithin{equation}{section}
        \date{\today}
\title[Local homeomorphisms]{On the $C^*$-algebra of a locally injective
  surjection and its KMS states}
\author{Klaus Thomsen}
\date{\today}
\email{matkt@imf.au.dk}
\address{Institut for matematiske fag, Ny Munkegade, 8000 Aarhus C, Denmark}
\begin{document}

\maketitle

\section{Introduction}\label{sec1}

In \cite{Th} the construction of a $C^*$-algebra from an \'etale
groupoid, as introduced by J.Renault in \cite{Re1}, was
generalized to a larger class of locally compact groupoids called
semi-\'etale groupoids, where the
range and source maps are locally injective, but not necessarily open. The
main purpose with the generalization was to make the powerful
techniques for \'etale groupoids available to the study of
dynamical systems via the groupoid constructed in increasing generality
by Renault, Deaconu and Anantharaman-Delaroche, \cite{Re1}, \cite{D},
\cite{A}, also when the underlying map is not open. In particular, as shown in \cite{Th} this makes it possible to handle general
(one-sided) subshifts. 

One of the intriguing connections between dynamical systems and
$C^*$-algebras is the relation between the termodynamical formalism of
Ruelle, as described in \cite{Ru}, and quantum statistical mechanics,
as described in \cite{BR}. One relation between these formalisms is
very concrete and direct and manifests itself in almost all of the
$C^*$-algebraic settings of quantum statistical mechanics through a bijective
correspondance between KMS states and measures fixed by a dual
Ruelle operator. This relation is implicit in the work of J. Renault,
\cite{Re1} and \cite{Re2}, and has been developed further by R. Exel,
\cite{E}. By using this correspondance Kumjian and Renault,
\cite{KR}, were able to use Walters results, \cite{W2}, on the convergence of the
Ruelle operator to extend most results on the existence and
uniqueness of KMS states for the generalized gauge
actions on Cuntz-Krieger algebras which has been one of the favourite
models in quantum statistical mechanics.

The main purpose with the present work is to show that there is a
canonical way to pass from a locally injective continuous surjection
to a local homeomorphism in such a way that the $C^*$-algebras of the
corresponding groupoids, one of them defined as in \cite{Th}, are
isomorphic. The construction is a generalization of W. Kriegers construction of a canonical extension for a sofic shift, \cite{Kr1}, \cite{Kr2},
now known as the left Krieger cover. The canonical local homeomorphic
extension of a general locally injective surjection which we construct
is
undoubtedly useful for other purposes, and it seems to deserve a more
thorough investigation. Here we use it to investigate the KMS states of
the generalized gauge actions. In fact, we restrict our considerations
even further by focusing only on the possible values of the inverse
temperature $\beta$ for such KMS states. The results we obtain give
bounds on the possible $\beta$-values and ensure the existence of KMS
states under mild conditions on the potential function. We depart from
the work of Exel in \cite{E} and the main tool to prove existence of KMS
states is a method 
developed by Matsumoto, Watatani and Yoshida in \cite{MWY} and
Pinzari, Watatani and Yonetani in \cite{PWY}. Concerning bounds on the
possible $\beta$-values of KMS states the main novelty is the
observation that it is not so much the entropy of the map which
provides the bounds but rather the exponential growth rate of the
number of pre-images. The relevant entity is thus an invariant $h_m$
which was introduced by Hurley in \cite{Hu} and studied further in
\cite{FFN}, among others. For forward expansive maps the invariant of Hurley
is equal to the topological entropy, but in generally it is smaller
than the topological entropy. The invariant of Hurley controls the
existence of KMS states completely when the potential function is
strictly positive or strictly negative: For such potential functions
there is a KMS-state if and only if $h_m$ is not zero. We refer to Section \ref{KMSsection} for
more details on our results on KMS states.


\section{Recap about $C^*_r\left(\Gamma_{\varphi}\right)$}\label{sec2}

Let $X$ be a locally compact Hausdorff space and $\varphi : X \to X$ a
continuous map. We assume that $\varphi$ is locally injective, meaning
that there is a basis for the topology of $X$ consisting of sets on
which $\varphi$ is injective. Set
$$
\Gamma_{\varphi} = \left\{ (x,k,y) \in X \times \mathbb Z \times X : \ \exists
  a,b \in \mathbb N, \ k = a-b, \ \varphi^a(x) = \varphi^b(y) \right\} .
$$
This is a groupoid with the set of composable pairs being
$$
\Gamma_{\varphi}^{(2)} \ =  \ \left\{\left((x,k,y), (x',k',y')\right) \in \Gamma_{\varphi} \times
  \Gamma_{\varphi} : \ y = x'\right\}.
$$
The multiplication and inversion are given by 
$$
(x,k,y)(y,k',y') = (x,k+k',y') \ \text{and}  \ (x,k,y)^{-1} = (y,-k,x)
.
$$
To turn $\Gamma_{\varphi}$ into a locally compact topological groupoid, fix $k \in \mathbb Z$. For each $n \in \mathbb N$ such that
$n+k \geq 0$, set
$$
{\Gamma_{\varphi}}(k,n) = \left\{ \left(x,l, y\right) \in X \times \mathbb
  Z \times X: \ l =k, \ \varphi^{k+i}(x) = \varphi^i(y), \ i \geq n \right\} .
$$
This is a closed subset of the topological product $X \times \mathbb Z
\times X$ and hence a locally compact Hausdorff space in the relative
topology.
Since $\varphi$ is locally injective $\Gamma_{\varphi}(k,n)$ is an open subset of
$\Gamma_{\varphi}(k,n+1)$ and hence the union
$$
{\Gamma_{\varphi}}(k) = \bigcup_{n \geq -k} {\Gamma_{\varphi}}(k,n) 
$$
is a locally compact Hausdorff space in the inductive limit topology. The disjoint union
$$
\Gamma_{\varphi} = \bigcup_{k \in \mathbb Z} {\Gamma_{\varphi}}(k)
$$
is then a locally compact Hausdorff space in the topology where each
${\Gamma_{\varphi}}(k)$ is an open and closed set. In fact, as is easily verified, $\Gamma_{\varphi}$ is a locally
compact groupoid in the sense of \cite{Re1}. Note that the unit space $\Gamma_{\varphi}^0$ of
$\Gamma_{\varphi}$ equals $X$ via the identification $x \mapsto (x,0,x)$. The local
injectivity of $\varphi$ ensures that the range map $r(x,k,y) = x$ is
locally injective, i.e. $\Gamma_{\varphi}$
is semi \'etale. We can therefore define the corresponding
$C^*$-algebra $C^*_r\left(\Gamma_{\varphi}\right)$ as in
\cite{Th}. Briefly $C^*_r\left(\Gamma_{\varphi}\right)$ is the
completion of the $*$-algebra $\alg^* \Gamma_{\varphi}$ generated by the continuous and compactly
supported function on $\Gamma_{\varphi}$ under the convolution product
$$
f \star g (x,k,y) = \sum_{z, m+n = k} f(x,n,z)g(z,m,y) ,
$$
and the involution
$$
f^*(x,k,y) = \overline{f(y,-k,x)} .
$$  
The elements of $\alg^* \Gamma_{\varphi}$ are all bounded and of
compact support, but not necessarily continuous. The elements of $\alg^*
\Gamma_{\varphi}$ whose supports are contained in the unit space,
identified with $X$ as it is, generate under the completion an abelian
$C^*$-algebra $D_{\Gamma_{\varphi}}$ which contains $C_0(X)$ and consists of bounded
functions vanishing at infinity. The restriction map extends to a
conditional expectation $P_{\Gamma_{\varphi}} :
C^*_r\left(\Gamma_{\varphi}\right) \to D_{\Gamma_{\varphi}}$.

Let us now restrict the attention to the case where $X$ is
compact and metrizable. One of the results from \cite{Th} is that
$C^*_r\left(\Gamma_{\varphi}\right)$ can then be realized as a crossed
$C^*_r\left(R_{\varphi}\right) \times_{\widehat{\varphi}} \mathbb N$
in the sense of Paschke,
where $C^*_r\left(R_{\varphi}\right)$ is the $C^*$-subalgebra of
$C^*_r\left(\Gamma_{\varphi}\right)$ generated by
$C_c\left(\Gamma_{\varphi}(0)\right)$ and $\widehat{\varphi}$ is the
endomorphism of $C^*_r\left(\Gamma_{\varphi}\right)$ given by conjugation
with the isometry $V_{\varphi}$, where 
$$
V_{\varphi}(x,k,y) = \begin{cases} m(x)^{-\frac{1}{2}} \ & \
  \text{when $k =1$ and $y = \varphi(x)$} \\ 0 \ & \ \text{otherwise.} \end{cases}
$$
The function $m : X \to \mathbb N$ which enters here is also going to
play an important role in the present paper and it is equal to $m = N \circ \varphi$, with
$$
N(x) = \# \varphi^{-1}(x) .
$$
While this crossed product description is useful for several purposes,
including the calculation of the $K$-theory groups of
$C^*_r\left(\Gamma_{\varphi}\right)$, it is going to be instrumental
here to relate to a crossed product description in the sense of Exel, \cite{E}.

\section{$C^*_r\left(\Gamma_{\varphi}\right)$ as a crossed product in
  the sense of Exel}\label{sec3}

Let $f \in D_{\Gamma_{\varphi}}$. Then 
$P_{\Gamma_{\varphi}}\left(V_{\varphi}fV_{\varphi}^*\right)(x) =
m(x)^{-1}f\left(\varphi(x)\right)$.
Since $m \in D_{\Gamma_{\varphi}}$ this
shows that $f \circ \varphi \in D_{\Gamma_{\varphi}}$. We can
therefore define a $*$-endomorphism 
$\alpha_{\varphi}$ of $D_{\Gamma_{\varphi}}$
such that 
\begin{equation}\label{alphavarphi}
\alpha_{\varphi}(f) = f \circ \varphi.
\end{equation}
Note that $\alpha_{\varphi}$
is unital, and injective since $\varphi$ is surjective. Let $f \in
D_{\Gamma_{\varphi}}$, and let $1_{\Gamma_{\varphi}(1,0)}$ be the
characteristic function of the open and compact subset
$\Gamma_{\varphi}(1,0)$ of $\Gamma_{\varphi}$. Then
$1_{\Gamma_{\varphi}(1,0)}^*f1_{\Gamma_{\varphi}(1,0)} \in
D_{\Gamma_{\varphi}}$ and
\begin{equation}\label{tauborineman}
1_{\Gamma_{\varphi}(1,0)}^*f1_{\Gamma_{\varphi}(1,0)}(x) = \sum_{z \in
  \varphi^{-1}(x)} f(z) .
\end{equation}
Hence the function $X \ni x \mapsto \sum_{z \in
  \varphi^{-1}(x)} f(z)$ is in $D_{\Gamma_{\varphi}}$. In particular,
the function
$$
N(x) = \# \varphi^{-1}(x) = \sum_{z \in
  \varphi^{-1}(x)} 1
$$
is in $D_{\Gamma_{\varphi}}$. This allows us to define $\mathcal L_{\varphi} :
D_{\Gamma_{\varphi}} \to D_{\Gamma_{\varphi}}$ such that
$$
\mathcal L_{\varphi}(f)(x) = N(x)^{-1} \sum_{z \in
  \varphi^{-1}(x)} f(z) .
$$
$\mathcal L_{\varphi}$ is a unital positive linear map and 
$\mathcal L_{\varphi}\left(f \alpha_{\varphi}(g)\right) = \mathcal
L_{\varphi}(f)g$ for all $f,g \in
D_{\Gamma_{\varphi}}$. Hence $\mathcal L_{\varphi}$ is a transfer operator in
the sense of Exel, cf. \cite{E} and \cite{EV}, so that the crossed
product 
$$
D_{\Gamma_{\varphi}} \rtimes_{\alpha_{\varphi}, \mathcal L_{\varphi}} \mathbb N
$$
is defined. Observe that $\mathcal L_{\varphi}$ is faithful and that
the Standing Hypotheses of \cite{EV}, Hypotheses 3.1, are all
satisfied.

The following result generalizes Theorem 9.2 in \cite{EV}, and to some
extend also Theorem 4.18 of \cite{Th}.

\begin{thm}\label{exeliso} There is a $*$-isomorphism $D_{\Gamma_{\varphi}} \rtimes_{\alpha_{\varphi}, \mathcal L_{\varphi}} \mathbb N \to
 C^*_r\left(\Gamma_{\varphi}\right)$ which is the identity on
  $D_{\Gamma_{\varphi}}$ and takes the isometry $S$ of Exel
  (cf. \cite{E}) to the isometry $V_{\varphi} \in
  C^*_r\left(\Gamma_{\varphi}\right)$.
\begin{proof} Since $\varphi$ is locally injective there is a partition of unity
$\{b_i\}_{i=1}^k$ in $C(X) \subseteq D_{\Gamma_{\varphi}}$ such that
$\varphi$ is injective on $\supp b_i$ for each $i$. It is then
straightforward to check that
$$
f = \sum_{i=1}^k \left(b_i m \right)^{\frac{1}{2}}
\alpha_{\varphi} \circ \mathcal L_{\varphi}\left(\left(b_i m\right)^{\frac{1}{2}} f\right)
$$
for all $f \in D_{\Gamma_{\varphi}}$, so that $\left\{\left(b_i m\right)^{\frac{1}{2}}\right\}_{i=1}^k$ is a
quasi-basis for the conditional expectation $\alpha_{\varphi} \circ
\mathcal L_{\varphi}$ of $D_{\Gamma_{\varphi}}$ onto
$\alpha_{\varphi}\left(D_{\Gamma_{\varphi}}\right)$ in the sense of \cite{EV}. It
is also straightforward to check that $V_{\varphi}f = \alpha_{\varphi}(f)V_{\varphi}$
and $V_{\varphi}^*fV_{\varphi} = \mathcal L_{\varphi}(f)$ 
for all $f \in
  D_{\Gamma_{\varphi}}$. Furthermore,  
$$
1 = \sum_{i=1}^k
\left(b_im \right)^{\frac{1}{2}}V_{\varphi}V_{\varphi}^*\left(b_i m\right)^{\frac{1}{2}} .
$$
It follows therefore from Corollary 7.2 of \cite{EV} that there is a
$*$-homomorphism $\rho : D_{\Gamma_{\varphi}} \rtimes_{\alpha_{\varphi},
  \mathcal L_{\varphi}}  \mathbb N\to
  C^*_r\left(\Gamma_{\varphi}\right)$ which is the identity on
  $D_{\Gamma_{\varphi}}$ and takes the isometry $S$ to the isometry $V_{\varphi} \in
  C^*_r\left(\Gamma_{\varphi}\right)$. To see that $\rho$ is
  surjective we must show that $C^*_r\left(\Gamma_{\varphi}\right)$ is
  generated by $D_{\Gamma_{\varphi}}$ and $V_{\varphi}$. From the expresssion
  for $V_{\varphi}^n\left(V_{\varphi}^*\right)^n$ given in the proof of Theorem 4.8 of
  \cite{Th}, combined with Corollary 4.5 from \cite{Th}, it follows
  that the $C^*$-algebra generated by $V_{\varphi}$ and $D_{\Gamma_{\varphi}}$
  contains the characteristic function $1_{R\left(\varphi^n\right)}$
  for each $n$. It follows then that it contains
\begin{equation}\label{prodfunc}
C(X) \star 1_{R\left(\varphi^n\right)} \star C(X)  
\end{equation}
since $C(X) \subseteq D_{\Gamma_{\varphi}}$. Among the functions in (\ref{prodfunc}) are the elements of
$C\left(R\left(\varphi^n\right)\right)$ which are restrictions to
$R\left(\varphi^n\right)$ of
product type functions, $X \times X \ni (x,y) \mapsto f(x)g(y)$, with
$f,g \in C(X)$. These functions generate $C(X \times X)$ and their
restriction generate $C\left(R\left(\varphi^n\right)\right)$ so it
follows that the $C^*$-algebra generated by $V_{\varphi}$ and $D_{\Gamma_{\varphi}}$
  contains $C\left(R\left(\varphi^n\right)\right)$ for each $n$. Since
$$
C^*_r\left(R_{\varphi}\right) = \overline{\bigcup_n
  C\left(R\left(\varphi^n\right)\right)}
$$
we conclude from Theorem 4.6 of \cite{Th} that it
  coincides with $C^*_r\left(\Gamma_{\varphi}\right)$, proving that
  $\rho$ is surjective. Finally, it follows from Theorem 4.2 of
  \cite{EV} that $\rho$ is injective since the gauge action on
  $C^*_r\left(\Gamma_{\varphi}\right)$ can serve as the required $\mathbb
  T$-action.
\end{proof}
\end{thm}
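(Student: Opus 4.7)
The plan is to invoke the Exel--Vershik machinery of \cite{EV} to produce the map $\rho$ and then verify surjectivity and injectivity separately. The starting point is to check that the pair $(V_{\varphi}, D_{\Gamma_{\varphi}})$ sitting inside $C^*_r(\Gamma_{\varphi})$ already satisfies the expected Exel covariance relations $V_{\varphi}f = \alpha_{\varphi}(f)V_{\varphi}$ and $V_{\varphi}^* f V_{\varphi} = \mathcal L_{\varphi}(f)$ for $f \in D_{\Gamma_{\varphi}}$. Both are direct convolution computations from the explicit formula for $V_{\varphi}$ and from (\ref{tauborineman}) (the second identity being essentially the definition of $\mathcal L_{\varphi}$ once one observes the extra factor of $m^{-1}$ baked into $V_{\varphi}$).

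Next I would build a quasi-basis for the conditional expectation $\alpha_{\varphi}\circ \mathcal L_{\varphi} : D_{\Gamma_{\varphi}} \to \alpha_{\varphi}(D_{\Gamma_{\varphi}})$. Local injectivity of $\varphi$ produces a finite partition of unity $\{b_i\}_{i=1}^k \subseteq C(X)$ with $\varphi$ injective on each $\supp b_i$, and the natural candidate is $\{(b_i m)^{1/2}\}_{i=1}^k$. Verifying the identity
\eQ{ f = \sum_{i=1}^k (b_i m)^{1/2}\, \alpha_{\varphi}\!\circ\! \mathcal L_{\varphi}\bigl((b_i m)^{1/2} f\bigr) }
reduces, pointwise on $X$, to the observation that for $x$ in the image the fiber $\varphi^{-1}(\varphi(x))$ has exactly $m(x)$ points and the functions $b_i$ distinguish them. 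The redundancy relation $1 = \sum_i (b_i m)^{1/2} V_{\varphi}V_{\varphi}^*(b_i m)^{1/2}$ is then a parallel calculation, and together these inputs allow me to cite Corollary 7.2 of \cite{EV} to obtain the homomorphism $\rho$ that is the identity on $D_{\Gamma_{\varphi}}$ and sends $S \mapsto V_{\varphi}$.

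Surjectivity is, to my mind, the main obstacle. The goal is to show that $D_{\Gamma_{\varphi}}$ together with $V_{\varphi}$ generates $C^*_r(\Gamma_{\varphi})$. The strategy is to climb up through the filtration $\{C(R(\varphi^n))\}$ of $C^*_r(R_{\varphi})$: from the formula for $V_{\varphi}^n (V_{\varphi}^*)^n$ in the proof of Theorem 4.8 of \cite{Th}, combined with Corollary 4.5 of \cite{Th}, the generated $C^*$-subalgebra contains each characteristic function $1_{R(\varphi^n)}$. Convolving on both sides by $C(X) \subseteq D_{\Gamma_{\varphi}}$ produces all restrictions to $R(\varphi^n)$ of elementary tensors $(x,y) \mapsto f(x)g(y)$; a Stone--Weierstrass argument on $X \times X$ then shows these exhaust $C(R(\varphi^n))$. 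Taking closures and applying Theorem 4.6 of \cite{Th} yields $C^*_r(R_{\varphi})$, and then Theorem 4.18 of \cite{Th} (the crossed-product-in-the-sense-of-Paschke description) gives everything.

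For injectivity I would appeal to Theorem 4.2 of \cite{EV}, whose hypothesis is a gauge-type circle action compatible with the generator $S$. The natural gauge action on $C^*_r(\Gamma_{\varphi})$, which rotates $V_{\varphi}$ by $z \in \mathbb T$ and fixes $D_{\Gamma_{\varphi}}$ pointwise, fits the bill, completing the argument. The essentially routine steps are the covariance identities and the quasi-basis verification; the one that demands genuine care is the Stone--Weierstrass passage to $C(R(\varphi^n))$, since the space $R(\varphi^n)$ can be complicated and one must be sure the restricted product-type functions separate its points.
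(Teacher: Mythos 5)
Your proposal is correct and follows essentially the same route as the paper: the same quasi-basis $\{(b_i m)^{1/2}\}$ and covariance/redundancy relations feeding into Corollary 7.2 of \cite{EV}, the same surjectivity argument via $V_{\varphi}^n(V_{\varphi}^*)^n$, the projections $1_{R(\varphi^n)}$ and product-type functions generating $C(R(\varphi^n))$, and the same appeal to Theorem 4.2 of \cite{EV} with the gauge action for injectivity. The only difference is cosmetic (which result of \cite{Th} is cited at the final step of surjectivity), not a difference of method.
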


\section{A canonical local homeomorphism extending $(X,\varphi)$}\label{sec4}

In this section we show that the continuous map $\psi$
from the Gelfand spectrum of
$D_{\Gamma_{\varphi}}$ to itself which corresponds to the
endomorphism (\ref{alphavarphi}) of $D_{\Gamma_{\varphi}}$ is a local homeomorphism
and that the corresponding dynamical system is a canonical extension
of $(X,\varphi)$. The proof is based on the well-known contravariant
equivalence between compact Hausdorff spaces and unital abelian
$C^*$-algebras.

To simplify notation, set $D = D_{\Gamma_{\varphi}}$ and let
$\widehat{D}$ be the Gelfand spectrum of
$D_{\Gamma_{\varphi}}$. Recall that $\widehat{D}$ consists of the unital
$*$-homomorphisms $c: D \to \mathbb C$, also known as the
\emph{characters} of $D$. $\widehat{D}$ is closed in the
weak*-topology of the unit
ball in the dual space $D^*$ of $D$ and obtains in this way a compact
topology. Since $X$ is compact and metrizable it follows that $D$ is
separable and it follows that also $\widehat{D}$ is metrizable. Finally, recall that every
element $d \in D$ becomes a continuous function on $\widehat{D}$ in
the natural way; viz. $d(c) = c(d)$, and this recipe gives rise to an
(isometric) $*$-isomorphism between $D$ and
$C(\widehat{D})$ which we suppress in the notation by
simply identifying $D$ and $C(\widehat{D})$ whenever it is
convenient.

There is a map $\pi : \widehat{D} \to X$
arising from the fact that every character of $C(X)$ comes from
evaluation at point in $X$: Given a character $c \in \widehat{D}$ of
$D$ there is a unique point $\pi(c) \in X$ such that
$c(f) = f\left(\pi(c)\right) $
for all $f \in C(X)$. Note that $\pi$ is continuous. We define $\psi : \widehat{D} \to \widehat{D}$ such that
$\psi(c)(g) = c\left(g \circ \varphi\right) $
for all $g \in D$. It follows straightforwardly from the definition of
the topology of $\widehat{D}$ that $\psi$ is continuous. Hence
$\left(\widehat{D},\psi\right)$ is a dynamical system. Note that
$$
f\left(\left(\varphi \circ \pi\right(c)\right) = f \circ
\varphi\left(\pi(c)\right) = c\left(f \circ \varphi\right) =\psi(c)(f)
= f\left(\pi \circ \psi(c)\right)
$$
for all $f \in C(X)$, proving that $\pi : \left(X,\varphi\right) \to
(\widehat{D}, \psi)$ is equivariant. Define $\iota : X \to \widehat{D}$ by $\iota(x) = c_x \in
\widehat{D}$ where $c_x$ is the character defined such that
$c_x(g) = g(x)$
for all $g \in D$. Since $g\left(\psi \circ \iota(x)\right) =
c_x\left( g \circ \varphi\right) = g\left(\varphi(x)\right) = c_{\varphi(x)}(g)$ we see
that also $\iota : (X,\varphi) \to (\widehat{D}, \psi)$ is
equivariant. Furthermore    
$\pi \circ \iota(x) = x$
for all $x \in X$, proving that $\iota$ is injective and $\pi$
surjective. Note, however, that $\iota$ is generally not
continuous. Since $g \in D, c_x(g) = 0 \ \forall x \in X \
\Rightarrow \ g = 0$, the range $\iota(X)$ of $\iota$ is dense in
$\widehat{D}$. 

It is
evident that the construction of $(\widehat{D}, \psi)$  is
canonical in the following sense: If $\varphi' : X' \to X'$ is another locally
injective surjection of a compact Hausdorff space $X'$, then a conjugacy from
$(X,\varphi)$ to $(X',\varphi')$ induces a conjugacy from
$(\widehat{D},\psi)$ to $(\widehat{D'},\psi')$
which extends the given conjugacy in the sense
that the diagram
\begin{equation*}
\begin{xymatrix}{
{\widehat{D}} \ar[d]_-{\pi} \ar[r] & {\widehat{D'}} \ar[d]^-{\pi'} \\
X \ar[r] & X' }
\end{xymatrix}
\end{equation*}
commutes.

It remains now only to establish the following

\begin{prop}\label{prop} $\psi$ is a surjective local homeomorphism.
\begin{proof} $\psi$ is locally injective: Let $c \in \widehat{D}$ and
  set $z = \pi(\psi(c)) = \varphi\left(\pi(c)\right)$. By Lemma 3.6 of \cite{Th} there is an open
  neighborhood $U$ of $z$ and open sets $V_i, i = 1,2, \dots, j$,
  where $j =\#
  \varphi^{-1}(z)$, such that
\begin{enumerate}
\item[1)] $\varphi^{-1}\left(\overline{U}\right) \subseteq V_1 \cup
  V_2 \cup \dots \cup V_j$,
\item[2)] $\overline{V_i} \cap \overline{V_{i'}} = \emptyset$ when $i
  \neq i'$, and
\item[3)] $\varphi$ is injective on $\overline{V_i}$ for each $i$.
\end{enumerate}
Without loss of generality we may assume that $\pi(c) \in V_1$. Let $h,H \in C(X)$ be
  such that $0 \leq h \leq 1$, $h\left(\pi(c)\right) = 1$,
  $\varphi\left(\supp h\right) \subseteq U$, $Hh = h$ and $\supp H
  \subseteq \overline{V_1}$. Set 
$$
W = \left\{ c' \in \widehat{D} : \ c'(h) > 0 \right\} ;
$$
clearly an open subset of $\widehat{D}$. To show that $c \in W$ we
choose a sequence $\{z_k\}$ in $X$ such that $\lim_k
\iota\left(z_k\right) = c$. Then $\pi(c) = \lim_k \pi \circ
\iota\left(z_k\right) = \lim_k z_k$ so that 
$$
c(h) = \lim_k \iota\left(z_k\right)(h) = \lim_k h\left(z_k\right) =
h\left(\pi(c)\right) = 1.
$$
$W$ is therefore an open neighborhood of $c$ in $\widehat{D}$. To show
that $\psi$ is injective on $W$, let $c',c'' \in W$ and choose
sequences $\left\{z'_k\right\}$ and $\left\{z''_k\right\}$ in $X$ such
that $\lim_k \iota\left(z'_k\right) = c'$ and $\lim_k
\iota\left(z''_k\right) = c''$. Since
$$
\lim_k h\left(z'_k\right) = \lim_k \iota\left(z'_k\right)(h) = c'(h) >
0,
$$
it follows that $h\left(z'_k\right) > 0$ for all large $k$. Hence
$\varphi\left(z'_k\right) \in U$,
$H\left(z'_k\right) = 1$ and $z'_k \in \overline{V_1}$ for all large
$k$.
It follows that
\begin{equation*}
\begin{split}
&\psi(c')\left(\sum_{v \in \varphi^{-1 }(\cdot)} fH(v)\right) = \lim_k
\iota \circ \varphi\left(z'_k\right)\left(\sum_{v \in \varphi^{-1
    }(\cdot)} fH(v)\right) \\
& = \lim_k \sum_{v \in \varphi^{-1}(\varphi(z'_k))} fH(v) = \lim_k f(z'_k) =
c'(f)
\end{split}
\end{equation*}
for all $f \in D$. Similarly, 
$$ 
\psi(c'')\left(\sum_{v \in \varphi^{-1 }(\cdot)} fH(v)\right) = c''(f)
$$
for all $f \in D$. It follows that $\psi(c') = \psi(c'') \ \Rightarrow
\ c' = c''$, proving that $\psi$ is injective on $W$.

\smallskip

$\psi$ is open: Let $ f \in D$ be a non-negative function and set
$$
V = \left\{ c \in \widehat{D} : \ c(f) > 0 \right\}
.
$$
It suffices to show that $\psi(V)$ is open in $\widehat{D}$, so we
consider an element $c
\in V$, and set  
$$
W = \left\{ c' \in \widehat{D} : \ c'\left(  \sum_{v \in
  \varphi^{-1}(\cdot)} f(v) \right) > \frac{c(f)}{2}  \right\} .
$$
Let $\{z_k\}$ be a sequence in $X$ such that $\lim_k
\iota\left(z_k\right) = c$ and note that
\begin{equation*}
\begin{split}
&\psi(c)\left(  \sum_{v \in
  \varphi^{-1}(\cdot)} f(v) \right) = \lim_k \iota\left(\varphi(z_k)\right)\left(\sum_{v \in
  \varphi^{-1}(\cdot)} f(v)\right) \\
&= \lim_k \sum_{v \in
  \varphi^{-1}(\varphi(z_k))} f(v) 
 \geq \lim_k f(z_k) = \lim_k \iota(z_k)(f) = c(f) > \frac{c(f)}{2}.
\end{split}
\end{equation*}
It follows that $W$ is an open neighborhood of $\psi(c)$. It suffices
therefore to show that $W \subseteq \psi(V)$. Let $c' \in W$ and choose
a sequence $\{z'_k\}$ in $X$ such that $\lim_{k \to \infty} \iota(z'_k) = c'$
in $\widehat{D}$. For all large $k$, 
$$
\sum_{v \in
  \varphi^{-1}(z'_k)} f(v) = \iota\left(z'_k\right)\left( \sum_{v \in
    \varphi^{-1}(\cdot)} f \right)  > \frac{c(f)}{2}, 
$$
so for all large $k$ there are elements $v_k \in \varphi^{-1}(z'_k)$ such that
$f(v_k) \geq \frac{c(f)}{2M}$, where $M = \max_{x \in X} \#
\varphi^{-1}(x)$. Let $c''$ be a condensation point in $\widehat{D}$ of
the sequence $\left\{\iota(v_k)\right\}$. For the corresponding
subsequence $\left\{v_{k_i}\right\}$ we find that $\psi(c'') = \lim_i
\varphi\left(v_{k_i}\right) = \lim_i z'_{k_i} = c'$. Since
$$
c''(f) = \lim_i f\left(v_{k_i}\right) \geq  \frac{c(f)}{2M} > 0,
$$ 
it follows that $c'' \in V$, proving that $W \subseteq \psi(V)$.

\smallskip

$\psi$ is surjective: If $\psi(\widehat{D}) \neq \widehat{D}$, there
is an element $f \in D$ such that $f \neq 0$ and $f \geq 0$, while $\psi(c)(f) = 0$
for all $c \in \widehat{D}$. Since $\psi(c)(f) = c\left(f \circ
  \varphi\right)$ it follows that $f \circ \varphi = 0$. This is
impossible since $f \neq 0$ and $\varphi$ is surjective.

\end{proof}
\end{prop}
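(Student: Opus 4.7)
The plan is to attack surjectivity, local injectivity, and openness separately, translating everything to the Gelfand picture by identifying $D$ with $C(\widehat{D})$ and exploiting the density of the (in general discontinuous) embedding $\iota : X \to \widehat{D}$. Concretely, I would evaluate any $c \in \widehat{D}$ as a limit $c = \lim_k \iota(z_k)$ of point-evaluations, and combine this with the already-established equivariance $\psi \circ \iota = \iota \circ \varphi$ to transfer statements about $\varphi$ on $X$ into statements about $\psi$ on $\widehat{D}$. The one external input I will need is Lemma~3.6 of \cite{Th}, which at each $z \in X$ supplies an open $U \ni z$ and open sets $V_1,\dots,V_j$ with pairwise disjoint closures, each containing one preimage of $z$, on each of which $\varphi$ is injective.

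Surjectivity is almost immediate. The excerpt already notes that $\alpha_\varphi$ is injective, so by Gelfand duality its dual $\psi$ has dense image, and by compactness of $\widehat{D}$ the image is all of $\widehat{D}$. Equivalently, if $\psi(\widehat{D})$ were a proper closed subset one could find $0 \leq f \in D$, $f \neq 0$, vanishing on $\psi(\widehat{D})$; then $c(f\circ\varphi) = \psi(c)(f) = 0$ for every $c$ forces $f \circ \varphi = 0$ on $X$, contradicting $f \neq 0$ and $\varphi(X) = X$.

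For local injectivity at a point $c \in \widehat{D}$, I set $x = \pi(c)$, $z = \varphi(x)$, and apply Lemma~3.6 of \cite{Th}, assuming $x \in V_1$. I then pick cut-offs $h, H \in C(X) \subseteq D$ with $h(x) = 1$, $0 \leq h \leq 1$, $\varphi(\supp h) \subseteq U$, $\supp H \subseteq \overline{V_1}$, and $Hh = h$, and set $W = \{c' : c'(h) > 0\}$; this is an open neighborhood of $c$ because $\iota(z_k) \to c$ with $z_k \to x$ yields $c(h) = 1$. The crux is the observation that for every $f \in D$ the function $y \mapsto \sum_{v \in \varphi^{-1}(y)} f(v)H(v)$ again lies in $D$ by \eqref{tauborineman}. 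Approximating $c' \in W$ by $\iota(z_k') \to c'$ we get $h(z_k') > 0$, hence $z_k' \in \overline{V_1}$ and $H(z_k') = 1$ eventually; moreover the disjointness of the $\overline{V_i}$ and the injectivity of $\varphi$ on $\overline{V_1}$ ensure that $z_k'$ is the unique preimage of $\varphi(z_k')$ contributing to the sum. Therefore $\psi(c')$ determines $c'(f)$ for all $f$, so $\psi|_W$ is injective. Engineering this test function is, I expect, the main obstacle: one has to arrange the cut-off $H$ together with the choices of $h$ and $U$ so that the sum collapses to a single branch over a whole neighborhood of $c$.

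For openness, it suffices to show that each basic open set $V = \{c : c(f) > 0\}$ with $0 \leq f \in D$ has open image. Given $c \in V$, I define $W = \{c' : c'(F) > c(f)/2\}$ where $F(y) = \sum_{v \in \varphi^{-1}(y)} f(v) \in D$; then $\psi(c) \in W$ because $\psi \circ \iota = \iota \circ \varphi$ and $F(\varphi(z)) \geq f(z)$. For any $c' \in W$, approximating by $\iota(z_k') \to c'$ and applying the pigeonhole principle, with $M = \max_x \#\varphi^{-1}(x)$ bounding the number of summands, yields $v_k \in \varphi^{-1}(z_k')$ with $f(v_k) \geq c(f)/(2M)$ for large $k$. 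By sequential compactness of the metrizable space $\widehat{D}$ the sequence $\{\iota(v_k)\}$ has a cluster point $c''$, which lies in $V$ since $c''(f) \geq c(f)/(2M) > 0$ and satisfies $\psi(c'') = c'$ by continuity of $\psi$ and the equivariance $\psi(\iota(v_k)) = \iota(z_k')$. Hence $W \subseteq \psi(V)$, so $\psi(V)$ is open.
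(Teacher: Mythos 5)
Your proposal is correct and follows essentially the same route as the paper: surjectivity from injectivity of $\alpha_{\varphi}$ (equivalently the $f\circ\varphi=0$ contradiction), local injectivity via Lemma~3.6 of \cite{Th} with the cut-offs $h,H$ and the test functions $y\mapsto\sum_{v\in\varphi^{-1}(y)}f(v)H(v)$ collapsing to a single branch, and openness via the pigeonhole bound $M$ and a condensation point of $\{\iota(v_k)\}$. No gaps to report.
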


The dynamical system $(\widehat{D},\psi)$ will be called
\emph{the canonical local homeomorphic extension} of $(X,\varphi)$.
It can be shown that $(\widehat{D},\psi)$ is the left
Krieger cover of $(X,\varphi)$ when $(X,\varphi)$ is a one-sided sofic
shift.

\section{Isomorphism of the $C^*$-algebras
  $C^*_r\left(\Gamma_{\varphi}\right)$ and $C^*_r\left(\Gamma_{\psi}\right)$}\label{sec5} 

Since $\psi$ is a local homeomorphism the $C^*$-algebras
$C^*_r\left(R_{\psi}\right)$ and $C^*_r\left(\Gamma_{\psi}\right)$
coincide with the one considered in \cite{A}. In particular, the
abelian $C^*$-algebra $D_{\Gamma_{\psi}}$ is equal to $C(\widehat{D})
= D_{\Gamma_{\varphi}}$. In this section we show that this
identification, $D_{\Gamma_{\varphi}} = D_{\Gamma_{\psi}}$, is the restriction of an isomorphism between
$C^*_r\left(\Gamma_{\varphi}\right)$ and $C^*_r\left(\Gamma_{\psi}\right)$.

As above we let $N \in D$ be the function
$N(x) = \# \varphi^{-1}(x)$,
and set 
$
m = N \circ \varphi$.

\begin{lemma}\label{N} $c(N) = \# \psi^{-1}(c)$ for all $c \in
  \widehat{D}$.
\begin{proof} For any $f \in D$, let $I(f)$ denote the function
$$
I(f)(x) = \sum_{v \in \varphi^{-1}(x)} f(v) .
$$
It follows from (\ref{tauborineman}) that $I(f) \in D$. Let $c \in
\widehat{D}$ and let $\{z_k\}$ be a sequence in $X$ such that $\lim_k
  \iota(z_k) = c$. Set $z = \pi(c)$, and let be $U,V_1,V_2, \dots,V_j$ as
  in Lemma 3.6 of \cite{Th}, i.e. 1)-3) from the proof of Proposition
  \ref{prop} hold. Since $\lim_k N(z_k) = c(N)$ we can
  assume that $N(z_k) = c(N)$ for all $k$, and since $\lim_k z_k =
  \lim_k \pi \circ \iota\left(z_k\right) = z$
  in $X$ we can assume that $z_k \in U$ for all $k$. Choose functions $h_i, H_i
\in C_c(X), i = 1,2, \dots, j$, such that $0 \leq h_i \leq 1$,
$h_i\left(w_i\right) = 1$, where $w_i = V_i \cap \varphi^{-1}(z)$,
  $\varphi\left(\supp h_i\right) \subseteq U$, $H_ih_i = h_i$ and $\supp H_i
  \subseteq \overline{V_i}$ for all $i$.

Observe that $c(N) \leq j$ and set
$$
g_F = \prod_{i \in F} I(h_i) \in D
$$  
for every subset $F \subseteq \{1,2, \dots, j\}$ with $c(N)$
elements. For all sufficiently large $k$ there is a subset $F \subseteq \{1,2, \dots,
j\}$ with $c(N)$ elements such that $g_F(z_k) \geq \frac{1}{2}$. Indeed,
since $N(z_k) = c(N)$ there is for each $k$ a subset $F_k \subseteq
\left\{1,2, \dots, j\right\}$ with $c(N)$ elements and elements $v_k^i
\in V_i, \ i \in F_k$, such that
$\varphi^{-1}\left(z_k\right) = \left\{ v_k^i : \ i \in
  F_k\right\}$. When $g_{F_k}\left(z_k\right) <\frac{1}{2}$ there must
be at least one $i_k \in F_k$ for which 
$$
h_{i_k}\left(v_k^{i_k}\right) <
\left(\frac{1}{2}\right)^{\frac{1}{c(N)}} .
$$
Hence, if $g_{F_k}\left(z_k\right) <\frac{1}{2}$ for infinitely many
$k$, a condensation point of the sequence $\left\{v_k^{i_k}\right\}$
would give us, for some $i' \in \left\{1,2,\dots, j\right\}$,  a point
in $\overline{V_{i'}} \cap \varphi^{-1}(z)$ other than $w_{i'}$,
contradicting property 3) of the $V_i$'s. Hence
$g_{F_k}\left(z_k\right) \geq \frac{1}{2}$ for all sufficiently large
$k$. Since there are only
finitely many subsets of $\{1,2,\dots, j\}$ we can pass to a
subsequence of $\{z_k\}$ to arrange that the same subset $F'$ works for
all $k$, i.e. that 
\begin{equation}\label{equalone}
g_{F'}(z_k) \geq \frac{1}{2}
\end{equation}
for all $k$. Since $N(z_k) = c(N)
= \# F'$
this implies that   
$$
\varphi^{-1}(z_k) = \left\{v^i_k : \ i \in F' \right\}
$$
for some (unique) elements $v_k^i \in V_i, i \in F'$. 
 For each $i$,
let $c^i$ be a condensation point of $\{\iota\left(v^i_k\right)\}$ in
$\widehat{D}$. Then $\psi(c^i) = \lim_k
\psi\left(\iota\left(v^i_k\right)\right) = \lim_k
\iota\left(z_k\right) = c$ for all
$i$. Since $c^i\left(h_{i'}\right) = \lim_k h_{i'}\left(v^i_k\right) \neq 0$ if and 
only if $i=i'$ for $i,i' \in F'$, we conclude that $c^i \neq
c^{i'}$ when $i \neq i'$, proving that $\# \psi^{-1}(c) \geq c(N)$.

 As shown in the proof
  of Proposition \ref{prop}, $\psi$ is injective on 
$$
W_i = \left\{ c' \in \widehat{D} : \ c'\left(h_i\right) > 0 \right\} .
$$ 
To show that $\# \psi^{-1}(c) \leq N(c)$ it suffices therefore to show
that every element $c''$ of $\psi^{-1}(c)$ is contained in $W_i$ for
some $i \in F'$. To this end we pick a sequence $\{y_k\}$ in $X$ such
that $\lim_k \iota\left(y_k\right) = c''$ in $\widehat{D}$. Set $z'_k = \varphi(y_k)$ and
note that $\lim_k \iota\left(z'_k\right) = \psi(c'') =c$ while $\lim_k
z'_k = \lim_k \pi \circ \psi \circ \iota\left(y_k\right) = \lim_k \pi
\circ \psi\left(c''\right) = z$. In particular,
\begin{equation}\label{equaltwo}
N(z'_k) = c( N)
\end{equation} 
and 
\begin{equation}\label{equalthree}
z'_k \in U 
\end{equation}
for all sufficiently large
$k$. Furthermore, by using (\ref{equalone}) we find that
\begin{equation}\label{equal4}
\lim_k g_{F'}(z'_k)  = c\left(g_{F'}\right) = \lim_k
g_{F'}\left(z_k\right) \geq \frac{1}{2} .
\end{equation}
By combining (\ref{equaltwo}), (\ref{equalthree}) and (\ref{equal4})
we find that
$$
\varphi^{-1}\left(z'_k\right) \subseteq \bigcup_{i \in F'}
h_i^{-1}\left( \left]\frac{1}{4}, \infty\right[\right)
$$
for all large $k$. Since $y_k \in
\varphi^{-1}\left(z'_k\right)$, it follows that
$$
y_k \in \bigcup_{i \in F'} h_i^{-1}\left(\left]\frac{1}{4}, \infty\right[\right)
$$
for all large $k$. Hence there is an $i' \in F'$ such that $y_k \in
h_{i'}^{-1}\left(\left]\frac{1}{4}, \infty\right[\right)$ for
infinitely many $k$ which implies that 
$$
c''\left(h_{i'}\right) = \lim_k h_{i'}\left(y_k\right)  \geq
\frac{1}{4} .
$$
Hence $c'' \in W_{i'}$.
\end{proof}
\end{lemma}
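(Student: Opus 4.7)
My plan is to establish the equality by proving the two inequalities $\#\psi^{-1}(c) \geq c(N)$ and $\#\psi^{-1}(c) \leq c(N)$ separately, using the same local separating data around $z := \pi(c)$ that drove the proof of Proposition~\ref{prop}. Specifically, I apply Lemma~3.6 of \cite{Th} to obtain an open neighborhood $U$ of $z$ and pairwise disjoint open sets $V_1, \ldots, V_j$ (with $j = \#\varphi^{-1}(z)$ and $w_i := V_i \cap \varphi^{-1}(z)$) so that $\varphi^{-1}(\overline{U}) \subseteq \bigcup_i V_i$ and $\varphi$ is injective on each $\overline{V_i}$. I then pick bump functions $h_i \in C(X)$ with $0 \leq h_i \leq 1$, $h_i(w_i) = 1$, $\varphi(\operatorname{supp} h_i) \subseteq U$, and $\operatorname{supp} h_i \subseteq \overline{V_i}$. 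Since $\iota(X)$ is dense in $\widehat D$, choose $\{z_k\} \subseteq X$ with $\iota(z_k) \to c$; as $N$ is integer-valued, we may assume $N(z_k) = c(N) =: n$ and $z_k \in U$ for all $k$.

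For the lower bound, for each $F \subseteq \{1,\ldots,j\}$ with $|F|=n$ set $g_F := \prod_{i \in F} I(h_i)$, where $I(f)(x) := \sum_{v \in \varphi^{-1}(x)} f(v)$ lies in $D$ by (\ref{tauborineman}). Each $z_k$ has exactly $n$ preimages lying in some $n$-element subcollection of the $V_i$'s. A pigeonhole/condensation argument shows that a single index set $F'$ must work along a subsequence: otherwise, a condensation point of the putatively drifting preimages would give a point of $\overline{V_{i'}} \cap \varphi^{-1}(z)$ distinct from $w_{i'}$, violating injectivity of $\varphi$ on $\overline{V_{i'}}$. Writing $\varphi^{-1}(z_k) = \{v_k^i : i \in F'\}$ with $v_k^i \in V_i$, I take condensation points $c^i$ of $\{\iota(v_k^i)\}_k$ in $\widehat{D}$; then $\psi(c^i) = \lim_k \iota(\varphi(v_k^i)) = \lim_k \iota(z_k) = c$, and $c^i(h_{i'}) \neq 0$ iff $i = i'$ for $i,i' \in F'$, so the $c^i$ are $n$ distinct preimages of $c$.

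For the upper bound, recall that the proof of Proposition~\ref{prop} shows $\psi$ is injective on each open set $W_i := \{c' \in \widehat{D} : c'(h_i) > 0\}$; it therefore suffices to show that every $c'' \in \psi^{-1}(c)$ lies in some $W_{i'}$ with $i' \in F'$. Given such $c''$, pick $y_k \in X$ with $\iota(y_k) \to c''$ and set $z_k' := \varphi(y_k)$; then $\iota(z_k') \to c$, so eventually $N(z_k') = n$, $z_k' \in U$, and from $c(g_{F'}) > 0$ also $g_{F'}(z_k') \geq \delta$ for some $\delta > 0$. This forces $\varphi^{-1}(z_k') \subseteq \bigcup_{i \in F'} h_i^{-1}((\delta', \infty))$ for a suitable $\delta' > 0$ and all large $k$. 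Since $y_k \in \varphi^{-1}(z_k')$, some fixed $i' \in F'$ satisfies $h_{i'}(y_k) > \delta'$ infinitely often, yielding $c''(h_{i'}) \geq \delta' > 0$, i.e.\ $c'' \in W_{i'}$.

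The main technical obstacle is the condensation-and-subsequence juggling in the lower bound: one must simultaneously ensure that the preimages $v_k^i$ remain in their respective $V_i$'s along a subsequence and that the same index set $F'$ works for all sufficiently large $k$, both of which rely critically on the disjointness of the closures $\overline{V_i}$ together with injectivity of $\varphi$ on each $\overline{V_i}$. Once this bookkeeping is in place, the upper bound is largely a plug-in of the injectivity of $\psi$ on the sets $W_i$ established during the proof of Proposition~\ref{prop}.
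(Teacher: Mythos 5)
Your proposal is correct and follows essentially the same route as the paper's proof: the same separating data $U, V_1,\dots,V_j$ from Lemma 3.6 of \cite{Th}, the same products $g_F = \prod_{i\in F} I(h_i)$ with the pigeonhole/condensation argument to fix a single index set $F'$, condensation points $c^i$ of $\{\iota(v_k^i)\}$ for the lower bound, and the injectivity of $\psi$ on the sets $W_i$ (from Proposition \ref{prop}) for the upper bound. The only cosmetic differences are your constants $\delta,\delta'$ in place of the paper's $\tfrac12,\tfrac14$ and the suppression of the auxiliary functions $H_i$, which changes nothing of substance.
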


\begin{cor}\label{cor1} $\# \psi^{-1}\left(\psi(c)\right) = c(m)$ for
  all $c \in \widehat{D}$.
\begin{proof} Using Lemma \ref{N} for the first equality we find that
$\# \psi^{-1}\left(\psi(c)\right) = \psi(c)(N) = c(N
  \circ \varphi) = c(m)$.
\end{proof}
\end{cor}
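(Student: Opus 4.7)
The plan is to chain together two facts and nothing more. First, I would apply Lemma \ref{N} directly to the character $\psi(c) \in \widehat{D}$ in place of $c$; this immediately yields
\eQ{\# \psi^{-1}\left(\psi(c)\right) = \psi(c)(N).}
Second, I would invoke the defining relation of $\psi$, namely $\psi(c)(g) = c(g \circ \varphi)$ for all $g \in D$, applied to $g = N \in D$. Since $m$ was defined in Section \ref{sec5} as $m = N \circ \varphi$, this gives $\psi(c)(N) = c(N \circ \varphi) = c(m)$, and chaining the two identities closes the argument.

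There is really no obstacle here: $N \in D$ has already been established (it is computed as $1_{\Gamma_{\varphi}(1,0)}^* 1 \, 1_{\Gamma_{\varphi}(1,0)}$ via \eqref{tauborineman} in Section \ref{sec3}), so both $\psi(c)(N)$ and $c(m)$ are a priori well-defined scalars, and the map $\psi$ sends $\widehat{D}$ into $\widehat{D}$ by Proposition \ref{prop}, so Lemma \ref{N} is legitimately applicable at the point $\psi(c)$. The only conceptual content is noting that counting preimages under $\psi$ at $\psi(c)$ amounts to evaluating $N$ one step forward along the orbit, which on the $C^*$-algebra side is exactly the composition with $\varphi$ encoded by $\alpha_{\varphi}$.
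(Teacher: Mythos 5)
Your argument is exactly the paper's proof: apply Lemma \ref{N} at the character $\psi(c)$ to get $\# \psi^{-1}\left(\psi(c)\right) = \psi(c)(N)$, then use the defining relation $\psi(c)(g) = c(g \circ \varphi)$ together with $m = N \circ \varphi$ to conclude $\psi(c)(N) = c(m)$. Correct, and the same route as the paper.
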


\begin{lemma}\label{yes!!}
  $1_{\Gamma_{\psi}(1,0)}^*f1_{\Gamma_{\psi}(1,0)}(c) = c\left(
    \sum_{z \in \varphi^{-1}(\cdot)} f(z)\right)$ for all $c \in
  \widehat{D}$ and all $f \in D$.
\begin{proof} Since both sides are continuous in $c$ and $\iota(X)$ is
  dense in $\widehat{D}$ it suffices to
  establish the identity when $c = c_x$ for some $x \in
  X$. It follows from Proposition \ref{prop} that we can apply
  (\ref{tauborineman}) with $\psi$ replacing $\varphi$ to conclude
  that 
$$
1_{\Gamma_{\psi}(1,0)}^*f1_{\Gamma_{\psi}(1,0)}(c_x) = \sum_{c' \in
  \psi^{-1}(c_x)} c'(f) .
$$
In comparison we have that
$$
c_x\left(
    \sum_{z \in \varphi^{-1}(\cdot)} f(z)\right) = \sum_{z \in \varphi^{-1}(x)} f(z) .
$$
So it remains only to show that
\begin{equation}\label{important}
\psi^{-1}(c_x) = \left\{ c_z : \ z \in \varphi^{-1}(x) \right\} .
\end{equation}
In fact, since the two sets have the same number of elements by Lemma
\ref{N}, it suffices to check that $\psi(c_z) = c_x$ when $z \in
\varphi^{-1}(x)$. This is straightforward:  $\psi(c_z)(f) = c_z(f\circ \varphi) =
f\left(\varphi(z)\right) = f(x) = c_x(f)$ for all $f \in D$.
\end{proof}
\end{lemma}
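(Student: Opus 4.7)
The plan is to reduce the identity to the case where $c$ is an evaluation character $c_x = \iota(x)$ via the density of $\iota(X)$ in $\widehat{D}$, and then verify it by computing the fiber $\psi^{-1}(c_x)$ explicitly.

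First, I would note that both sides are continuous functions of $c \in \widehat{D}$. The left-hand side is an element of $D_{\Gamma_\psi} = D = C(\widehat{D})$ by construction. The right-hand side is $c$ evaluated on the fixed element $\sum_{z \in \varphi^{-1}(\cdot)} f(z) \in D$, whose membership in $D$ was already established in the discussion following equation (\ref{tauborineman}). Since $\iota(X)$ is dense in $\widehat{D}$, it suffices to verify the identity at $c = c_x$ for each $x \in X$.

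Next, since $\psi$ is a local homeomorphism by Proposition \ref{prop}, the range map of $\Gamma_\psi$ is locally injective and the formula (\ref{tauborineman}) applies with $(\widehat{D},\psi)$ in place of $(X,\varphi)$, yielding
$$
1_{\Gamma_\psi(1,0)}^* f\, 1_{\Gamma_\psi(1,0)}(c_x) \ = \ \sum_{c' \in \psi^{-1}(c_x)} c'(f).
$$
On the other hand,
$$
c_x\left( \sum_{z \in \varphi^{-1}(\cdot)} f(z)\right) \ = \ \sum_{z \in \varphi^{-1}(x)} f(z) \ = \ \sum_{z \in \varphi^{-1}(x)} c_z(f).
$$
Thus the lemma reduces to establishing the fiber identity
$$
\psi^{-1}(c_x) \ = \ \{c_z : z \in \varphi^{-1}(x)\}. \eqno(\ast)
$$

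For $(\ast)$, one inclusion is immediate: for every $z \in \varphi^{-1}(x)$ and every $g \in D$ one has $\psi(c_z)(g) = c_z(g\circ \varphi) = g(\varphi(z)) = g(x) = c_x(g)$, so $c_z \in \psi^{-1}(c_x)$. For the reverse inclusion, both sets are finite, and Lemma \ref{N} gives the crucial cardinality match
$$
\# \psi^{-1}(c_x) \ = \ c_x(N) \ = \ N(x) \ = \ \# \varphi^{-1}(x),
$$
which, combined with the injectivity of $\iota$, forces equality in $(\ast)$. The main (and really the only) obstacle here is the cardinality comparison, and Lemma \ref{N} does exactly that work; once $(\ast)$ is in hand the lemma follows immediately by combining the two displayed expressions above.
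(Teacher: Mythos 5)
Your proof is correct and follows essentially the same route as the paper: reduce to evaluation characters $c_x$ by continuity and density of $\iota(X)$, invoke Proposition \ref{prop} to apply (\ref{tauborineman}) to $\psi$, and establish the fiber identity $\psi^{-1}(c_x)=\{c_z: z\in\varphi^{-1}(x)\}$ by checking one inclusion directly and matching cardinalities via Lemma \ref{N}. Your explicit remark that injectivity of $\iota$ is needed for the counting step is a small but accurate addition.
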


Note that (\ref{important}) means that
\begin{equation}\label{iotaX}
\psi^{-1}\left(\iota(X)\right) = \iota(X) .
\end{equation}

We can now adopt the proof of Theorem \ref{exeliso} to get the
following:

\begin{thm}\label{isowithlochomeo} There is a $*$-isomorphism
  $C^*_r\left(\Gamma_{\varphi}\right) \to
  C^*_r\left(\Gamma_{\psi}\right)$ which is the identity on
  $D_{\Gamma_{\varphi}}$ and takes the isometry $V_{\varphi} \in
  C^*_r\left(\Gamma_{\varphi}\right)$ to $V_{\psi} \in
  C^*_r\left(\Gamma_{\psi}\right)$.
\end{thm}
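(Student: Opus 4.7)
The plan is to identify both $C^*_r(\Gamma_\varphi)$ and $C^*_r(\Gamma_\psi)$ with literally the same Exel crossed product and then compose the two isomorphisms supplied by Theorem~\ref{exeliso}. Since $D_{\Gamma_\varphi}$ and $D_{\Gamma_\psi}$ are both identified with $C(\widehat{D})$, write $D = D_{\Gamma_\varphi} = D_{\Gamma_\psi}$. I will show that the endomorphism $\alpha_\psi$ and the transfer operator $\mathcal{L}_\psi$ produced from the local homeomorphism $\psi$ agree, on the nose, with $\alpha_\varphi$ and $\mathcal{L}_\varphi$; then the two Exel crossed products $D \rtimes_{\alpha_\varphi, \mathcal{L}_\varphi}\mathbb{N}$ and $D \rtimes_{\alpha_\psi, \mathcal{L}_\psi}\mathbb{N}$ are literally the same $C^*$-algebra, carrying the same distinguished isometry $S$.

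First I would verify that $\alpha_\varphi = \alpha_\psi$. By definition of $\psi$, for $f \in D$ and $c \in \widehat D$,
\[
\alpha_\varphi(f)(c) \;=\; c(f \circ \varphi) \;=\; \psi(c)(f) \;=\; (f \circ \psi)(c) \;=\; \alpha_\psi(f)(c).
\]
Next I would show $\mathcal{L}_\varphi = \mathcal{L}_\psi$. The local homeomorphism analogue of \eqref{tauborineman} (applied to $\psi$) together with Lemma~\ref{yes!!} yields
\[
\sum_{c' \in \psi^{-1}(c)} f(c') \;=\; 1_{\Gamma_\psi(1,0)}^* f\, 1_{\Gamma_\psi(1,0)}(c) \;=\; c\!\left(\sum_{z \in \varphi^{-1}(\cdot)} f(z)\right),
\]
and Lemma~\ref{N} gives $\#\psi^{-1}(c) = c(N)$. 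Dividing, and using that multiplication in $D$ corresponds to pointwise multiplication on $\widehat D$,
\[
\mathcal{L}_\psi(f)(c) \;=\; \frac{1}{c(N)} c\!\left(\sum_{z \in \varphi^{-1}(\cdot)} f(z)\right) \;=\; c\!\left(N^{-1} \sum_{z \in \varphi^{-1}(\cdot)} f(z)\right) \;=\; \mathcal{L}_\varphi(f)(c).
\]

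With these two equalities in hand, the Exel crossed products $D \rtimes_{\alpha_\varphi,\mathcal{L}_\varphi} \mathbb{N}$ and $D \rtimes_{\alpha_\psi,\mathcal{L}_\psi} \mathbb{N}$ are identical as $C^*$-algebras, with a common Exel isometry $S$. Theorem~\ref{exeliso}, applied once to $\varphi$ and once to $\psi$ (the hypotheses hold for $\psi$ since it is a surjective local homeomorphism, so Proposition~\ref{prop} makes the proof of Theorem~\ref{exeliso} go through verbatim), produces $*$-isomorphisms
\[
C^*_r(\Gamma_\varphi) \;\xleftarrow{\;\;\rho_\varphi\;\;}\; D \rtimes \mathbb{N} \;\xrightarrow{\;\;\rho_\psi\;\;}\; C^*_r(\Gamma_\psi),
\]
each of which is the identity on $D$ and sends $S$ to $V_\varphi$, respectively $V_\psi$. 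The composition $\rho_\psi \circ \rho_\varphi^{-1}$ is then the desired $*$-isomorphism.

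The only genuine subtlety I foresee is ensuring that Theorem~\ref{exeliso} is indeed applicable to $\psi$: this requires checking that the partition-of-unity step and the quasi-basis identity $1 = \sum (b_i m_\psi)^{1/2} V_\psi V_\psi^* (b_i m_\psi)^{1/2}$ go through with $\psi$ and its multiplicity function $m_\psi = N_\psi \circ \psi$. Here $N_\psi(c) = \#\psi^{-1}(c) = c(N)$ by Lemma~\ref{N}, so $m_\psi = N_\psi \circ \psi = \alpha_\psi(N) = N \circ \varphi = m$ as elements of $D$; thus the quasi-basis constructed for $\varphi$ serves equally well for $\psi$, and the verification is routine rather than obstructive.
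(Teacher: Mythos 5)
Your proof is correct, but it is organized differently from the paper's. You identify the two Exel systems outright: under the identification $D_{\Gamma_{\varphi}} = C(\widehat{D}) = D_{\Gamma_{\psi}}$ you check $\alpha_{\varphi} = \alpha_{\psi}$ (immediate from the definition of $\psi$) and $\mathcal L_{\varphi} = \mathcal L_{\psi}$ (from Lemma \ref{N} and Lemma \ref{yes!!}, using that characters are multiplicative and $N \geq 1$ is invertible in $D$), so the two crossed products are literally the same object, and you then apply Theorem \ref{exeliso} twice --- once to $(X,\varphi)$ and once to $(\widehat{D},\psi)$, which is legitimate because $\widehat{D}$ is compact metrizable and $\psi$ is a continuous, locally injective surjection by Proposition \ref{prop} --- and compose the resulting isomorphisms. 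The paper instead uses only the $\varphi$-instance of Theorem \ref{exeliso}: it verifies directly inside $C^*_r\left(\Gamma_{\psi}\right)$ that the pair $\left(D_{\Gamma_{\varphi}}, V_{\psi}\right)$ satisfies the covariance relations and the quasi-basis identity (its conditions 1)--3), checked on the dense set $\iota(X)$ via Corollary \ref{cor1} and (\ref{iotaX})), invokes Corollary 7.2 of \cite{EV} to get a $*$-homomorphism $\mu$, and then proves injectivity from $P_{\Gamma_{\psi}} \circ \mu = P_{\Gamma_{\varphi}}$ together with faithfulness of $P_{\Gamma_{\varphi}}$, and surjectivity from the fact that $V_{\psi}$ and $D_{\Gamma_{\psi}} = D_{\Gamma_{\varphi}}$ generate $C^*_r\left(\Gamma_{\psi}\right)$. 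Your route buys bijectivity for free (both arrows are isomorphisms by Theorem \ref{exeliso}) and dispenses with the explicit verification of the quasi-basis relation 3) in $C^*_r\left(\Gamma_{\psi}\right)$; the paper's route stays closer to the concrete isometry $V_{\psi}$ and never needs to re-run Theorem \ref{exeliso} in the new setting $(\widehat{D},\psi)$.

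One small caveat: your closing claim that the quasi-basis $\left\{\left(b_i m\right)^{1/2}\right\}$ built for $\varphi$ ``serves equally well for $\psi$'' is not justified as stated. Knowing $m_{\psi} = m$ is not enough; you would also need $\psi$ to be injective on the supports of the $b_i$ viewed as functions on $\widehat{D}$, which amounts to the left-resolving property $\pi(c') = \pi(c''),\ \psi(c') = \psi(c'') \Rightarrow c' = c''$, a fact not established in the paper. Fortunately nothing in your argument requires it: Theorem \ref{exeliso} applied to $(\widehat{D},\psi)$ constructs its own partition of unity from the local injectivity of $\psi$ on the compact metrizable space $\widehat{D}$, so the remark can simply be deleted.
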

\begin{proof} We will appeal to Theorem \ref{exeliso} above and combine it
  with Corollary 7.2 of \cite{EV} for the
  existence of a $*$-homomorphism $C^*_r\left(\Gamma_{\varphi}\right) \to
  C^*_r\left(\Gamma_{\psi}\right)$ with the stated properties. We need
  therefore to check that
\begin{enumerate}
\item[1)] $V_{\psi}f = f \circ \varphi V_{\psi}$,
\item[2)] $c(V_{\psi}^*fV_{\psi}) = c\left( N(\cdot)^{-1} \sum_{z \in
      \varphi^{-1}(\cdot)} f(z)\right), \ c \in \widehat{D}$, and
\item[3)] $ 1 = \sum_{i=1}^k \left(b_i m\right)^{\frac{1}{2}}
  V_{\psi}V_{\psi}^* \left(b_i m\right)^{\frac{1}{2}}$ 
\end{enumerate}
where $f \in D$.
To check 1) note first that
$$
\left\{ \left(c_x,1, c_y\right)  : \ \varphi(x) = y\right\}
$$
is dense in $\Gamma_{\psi}(1,0)$. This follows from the density of $\iota(X)$ in $\widehat{D}$, the openness of
$\psi$ and (\ref{iotaX}). Since both sides of 1) are elements in $C_c\left(\Gamma_{\psi}(1,0)\right)$ it suffices therefore to check 1) on
elements of the form
$(c_x,1,c_y)$ with $\varphi(x) =y$ where it is easy: $V_{\psi}f\left(c_x,1,c_y\right) =
V_{\psi}\left(c_x,1,c_y\right)f\left(c_y\right) = f(\varphi(x)) = f\circ
\varphi v_{\psi}\left(c_x,1,c_y\right)$. The identity 2) is established
in a similar way: Since both sides are continuous functions on
$\widehat{D}$ it suffices to check it on elements from $\iota(X)$: 
\begin{equation*}
\begin{split}
&c_x\left( V_{\psi}^* fV_{\psi}\right) = \sum_{c' \in
  \psi^{-1}\left(c_x\right)} V_{\psi}^*\left(c_x, -1,
    c'\right)c'(f) V_{\psi}\left(c',1,c_x\right)\\
& = \sum_{c' \in
  \psi^{-1}\left(c_x\right)} \left(\#  \psi^{-1}\left(\psi(c')\right)\right)^{-1} c'(f) \\
&= \sum_{y \in
  \varphi^{-1}(x)} N(x)^{-1} f(y) \ \ \ \ \ \  \ \text{(by Corollary \ref{cor1}
  and (\ref{iotaX}))} \\
& = c_x \left( N(\cdot)^{-1} \sum_{z \in
      \varphi^{-1}(\cdot)} f(z)\right).   
\end{split}
\end{equation*}
To check 3) note that $\sum_{i=1}^k \left(b_i m\right)^{\frac{1}{2}}
  V_{\psi}V_{\psi}^* \left(b_i m\right)^{\frac{1}{2}} \in
  C_c\left(R(\psi)\right)$. Since elements of the form
  $\left(c_x,c_y\right)$ with $(x,y) \in R(\varphi)$ are dense in $R(\psi)$  
it suffices to show that for $(x,y) \in R(\varphi)$,
$$
\sum_{i=1}^k \left(b_i m\right)^{\frac{1}{2}}
  V_{\psi}V_{\psi}^* \left(b_i
    m\right)^{\frac{1}{2}}\left(c_x,c_y\right) = \begin{cases} 0 \ & \ \text{
    when $x \neq y$} \\ 1 \ & \ \text{when $x =y$.} \end{cases}
$$ 
So let $(x,y) \in R(\varphi)$. Then $\varphi(x) = \varphi(y)$ and we
find that
\begin{equation*}
\begin{split}
&\sum_{i=1}^k \left(b_i m\right)^{\frac{1}{2}}
  V_{\psi}V_{\psi}^* \left(b_i
    m\right)^{\frac{1}{2}}\left(c_x,c_y\right) \\ 
& = \sum_{i=1}^k
b_i(x)^{\frac{1}{2}}m(x)^{\frac{1}{2}}
m(x)^{-\frac{1}{2}}m(y)^{-\frac{1}{2}}
b_i(y)^{\frac{1}{2}}m(y)^{\frac{1}{2}} \ \ \ \ \ \ \ \ \ \ \ \ \ \ \ \text{(using Corollary
  \ref{cor1})} \\
& \\
&
= \begin{cases} 0 \ & \ \text{
    when $x \neq y$} \\ 1 \ & \ \text{when $x =y$} \end{cases}
\end{split}
\end{equation*}
since $\varphi$ is injective on $\supp b_i$ and
  $\sum_{i=1}^k b_i = 1$.
This establishes the existence of a $*$-homomorphism $\mu :
C^*_r\left(\Gamma_{\varphi}\right) \to
C^*_r\left(\Gamma_{\psi}\right)$ which is the identity on
$D_{\Gamma_{\varphi}}$ and takes $V_{\varphi}$ to $V_{\psi}$. The
injectivity of $\mu$ follows from the faithfulness of the
conditional expectation $P_{\Gamma_{\varphi}} :
C^*_r\left(\Gamma_{\varphi}\right) \to D_{\Gamma_{\varphi}}$ and the
observation that $P_{\Gamma_{\psi}} \circ \mu = P_{\Gamma_{\varphi}}$. And, finally, the
surjectivity of $\mu$ follows from the fact that
$C^*_r\left(\Gamma_{\psi}\right)$ is generated by $V_{\psi}$ and
$D_{\Gamma_{\psi}} = D_{\Gamma_{\varphi}}$. 
\end{proof}

By Theorem \ref{isowithlochomeo} we can identify
$C^*_r\left(\Gamma_{\varphi}\right)$ with
$C^*_r\left(\Gamma_{\psi}\right)$ and we will do that freely in the following.

\begin{remark}\label{isoremark} The isomorphism of Theorem
  \ref{isowithlochomeo} is clearly equivariant with respect to the
  gauge actions and it induces therefore an isomorphism between the
  corresponding fixed point algebras,
  $C^*_r\left(\Gamma_{\varphi}\right)^{\mathbb T}$ and
  $C^*_r\left(\Gamma_{\psi}\right)^{\mathbb T}$. Since $\psi$ is a
  local homeomorphism we have the equality
$C^*_r\left(\Gamma_{\psi}\right)^{\mathbb T} =
C^*_r\left(R_{\psi}\right).
$ 
Since there are subshifts $\sigma$ for which
$C^*_r\left(R_{\sigma}\right) \subsetneq
C^*_r\left(\Gamma_{\sigma}\right)^{\mathbb T}$
it follows that in general the isomorphism in Theorem
\ref{isowithlochomeo} does not take $C^*_r\left(R_{\varphi}\right)$
onto $C^*_r\left(R_{\psi}\right)$.
\end{remark}

\section{KMS states}\label{KMSsection}

Let $F : X \to \mathbb R$ be a realvalued function from $D$. Such a
function defines a continuous action $\alpha^F : \mathbb R  \to \Aut
C^*_r\left(\Gamma_{\varphi}\right)$ such that $\alpha^F_t(d) = d$ when
$d \in D_{\Gamma_{\varphi}}$ and $\alpha^F_t\left(V_{\varphi}\right) =
e^{iFt}V_{\varphi}$, cf. \cite{E}. 
The action $\alpha^F$ can also
be defined from the one-cocycle on $\Gamma_{\varphi}$ defined by $F$
as in the last line on page 2072 in
\cite{KR}, but the definition above allows us to combine Theorem
\ref{exeliso} with the work of Exel in \cite{E} to establish the connection
between the KMS states of $\alpha^F$ and the Borel probablity
measures on $\widehat{D}$ fixed by the dual of a Ruelle-type operator.

Let $\beta \in \mathbb R \backslash \{0\}$. A state $\omega$ on
$C^*_r\left(\Gamma_{\psi}\right)$ is \emph{a KMS state with inverse
  temperature $\beta$} for $\alpha^F$ (or just a $\beta$-KMS state for
short) when
\begin{equation}\label{KMS}
\omega(xy) = \omega\left(y \alpha^F_{i\beta}(x)\right)
\end{equation}
for all $\alpha^F$-analytic elements $x,y$ of
$C^*_r\left(\Gamma_{\varphi}\right)$. 

Let $\tau_{\lambda}, \lambda \in \mathbb T$, be the gauge action on
$C^*_r\left(\Gamma_{\psi}\right)$ (so that $\tau_t =
\alpha^F_{e^{it}}$ when $F$ is constant $1$) and let $P_{\Gamma_{\psi}} :
  C^*_r\left(\Gamma_{\psi}\right) \to D$ be the
  conditional expectation. Let $S(D)$ denote the set of states on
  $D$. When $\chi \in S(D)$ the composition $\chi \circ P_{\Gamma_{\psi}}$ is a
  state on $C^*_r\left(\Gamma_{\psi}\right)$. Note that $\chi \circ
  P_{\Gamma_{\psi}}$ is gauge-invariant since $P_{\Gamma_{\psi}} \circ
  \tau_{\lambda} = P_{\Gamma_{\psi}}$ for all $\lambda \in \mathbb
  T$.

Let $Q : C^*_r\left(\Gamma_{\psi}\right) \to
  C^*_r\left(R_{\Gamma_{\psi}}\right)$ be the conditional expectation 
$$
Q(x) = \int_{\mathbb T} \tau_{\lambda}(x) \ d\lambda .
$$

\begin{lemma}\label{KMSgauge} Let $\omega$ be a $\beta$-KMS state for
  $\alpha^F$. Then $\omega \circ Q$ is a gauge-invariant $\beta$-KMS state for
  $\alpha^F$.
\begin{proof} Let $x,y \in C^*_r\left(\Gamma_{\varphi}\right)$ be analytic for
$\alpha^F$. Since $\tau$ commutes with $\alpha^F$ we find that
\begin{equation*}
\begin{split}
& \omega \circ Q\left( xy\right) = \int_{\mathbb T}
\omega\left(\tau_{\lambda}(xy) \right) \ d \lambda \\
& = \int_{\mathbb T}
\omega\left(\tau_{\lambda}(x)\tau_{\lambda}(y) \right) \ d \lambda = \int_{\mathbb T}
\omega\left(\tau_{\lambda}(y)\alpha^F_{i\beta}\left(\tau_{\lambda}(x)\
  \right)\right) \ d \lambda \\
& = \int_{\mathbb T}
\omega\left(\tau_{\lambda}(y)\tau_{\lambda}(\alpha^F_{i\beta}(x)\right)
\ d \lambda =
\omega \circ Q\left(y\alpha^F_{i\beta}(x)\right) .
\end{split}
\end{equation*}

\end{proof}
\end{lemma}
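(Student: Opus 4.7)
My plan is to verify the two assertions of the lemma separately, with the fact that the gauge action $\tau$ and the one-parameter group $\alpha^F$ commute doing most of the work.

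For gauge-invariance, I would argue directly from the definition of $Q$ as averaging against Haar measure: since Haar measure on $\mathbb T$ is translation invariant, $\tau_\mu \circ Q = Q$ for every $\mu \in \mathbb T$, and applying $\omega$ on the left gives $\omega \circ Q \circ \tau_\mu = \omega \circ Q$. The work then reduces to showing that $\omega \circ Q$ satisfies the $\beta$-KMS condition. The key tool here is commutation of $\tau$ and $\alpha^F$: both actions fix $D_{\Gamma_\varphi}$ pointwise, and on the generating isometry one has $\tau_\lambda(V_\varphi) = \lambda V_\varphi$ while $\alpha^F_t(V_\varphi) = e^{iFt}V_\varphi$; since $F \in D_{\Gamma_\varphi}$, the scalar $\lambda$ and the element $e^{iFt} \in D_{\Gamma_\varphi}$ commute with each other and with everything in sight, so $\tau_\lambda \alpha^F_t = \alpha^F_t \tau_\lambda$ on the generators and hence globally. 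Two consequences I will need: (i) $\tau_\lambda(x)$ is again $\alpha^F$-analytic whenever $x$ is, and (ii) $\alpha^F_{i\beta}(\tau_\lambda(x)) = \tau_\lambda(\alpha^F_{i\beta}(x))$ for such $x$, both obtained by analytic continuation in $t$ of the real-parameter identity.

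Granting these, the main computation is straightforward. For $\alpha^F$-analytic $x,y$, I pull $\omega$ inside the integral defining $Q$ and use that each $\tau_\lambda$ is a $*$-homomorphism to get
\begin{equation*}
\omega \circ Q(xy) \;=\; \int_{\mathbb T} \omega\bigl(\tau_\lambda(x)\tau_\lambda(y)\bigr)\,d\lambda.
\end{equation*}
For each fixed $\lambda$, property (i) lets me apply the $\beta$-KMS condition for $\omega$ to the pair $\tau_\lambda(x),\tau_\lambda(y)$, yielding $\omega(\tau_\lambda(y)\alpha^F_{i\beta}(\tau_\lambda(x)))$; property (ii) rewrites this as $\omega(\tau_\lambda(y\,\alpha^F_{i\beta}(x)))$, and integrating over $\lambda$ produces $\omega \circ Q(y\,\alpha^F_{i\beta}(x))$, which is exactly the $\beta$-KMS identity.

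The main obstacle, if any, is verifying (i) and (ii) cleanly — i.e., that $\alpha^F$-analyticity is preserved by $\tau_\lambda$ and that the intertwining relation persists at the complex time $i\beta$. Since $\tau$ and $\alpha^F$ commute and $\tau$ is a strongly continuous group of isometric $*$-automorphisms, the entire function $z \mapsto \tau_\lambda(\alpha^F_z(x))$ extends $t \mapsto \alpha^F_t(\tau_\lambda(x))$, so both facts follow by uniqueness of analytic continuation. The rest is routine manipulation of the $Q$-integral.
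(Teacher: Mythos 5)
Your argument is correct and follows the paper's own proof essentially verbatim: commute $\tau$ with $\alpha^F$, apply the KMS condition to $\tau_\lambda(x),\tau_\lambda(y)$ under the integral defining $Q$, and use the intertwining $\alpha^F_{i\beta}\circ\tau_\lambda=\tau_\lambda\circ\alpha^F_{i\beta}$ to conclude. Your added remarks on gauge-invariance of $\omega\circ Q$ and on the preservation of $\alpha^F$-analyticity under $\tau_\lambda$ are correct details the paper leaves implicit.
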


For any $\beta \in \mathbb R$, define $L_{-\beta F} : D \to D$ such
that 
$$
L_{-\beta F}(g) (x) = \sum_{y \in \varphi^{-1}(x)} e^{-\beta F(y)}
g(y) .
$$

\begin{thm}\label{KMS} Let $\beta \in \mathbb R \backslash \{0\}$. The
  map $\chi \mapsto \chi \circ P_{\Gamma_{\psi}}$ is a bijection from the states $\chi \in S(D)$ which satisfy that
\begin{equation}\label{ooo}
\chi \circ L_{- \beta F} = \chi
\end{equation}
onto the gauge-invariant $\beta$-KMS states for $\alpha^F$.
\begin{proof} Consider first the case $\beta > 0$. By Proposition 9.2 and Section 11 in
  \cite{E} it suffices to show that any gauge-invariant $\beta$-KMS state $\omega$ of $\alpha^F$ factorizes through
  $P_{\Gamma_{\psi}}$, and this follows from Lemma 2.24 of \cite{Th}
  in the following way.
  Since $\omega$ is gauge-invariant we have that $\omega = \omega
  \circ Q$. Let $\left\{d_j\right\}$ be a partition of unity in $D$. Since $\alpha^F_{i\beta}\left(\sqrt{d_j}\right) =
  \sqrt{d_j}$ it follows from the KMS condition (\ref{KMS}) that
$\sum_j \omega\left(\sqrt{d_j}x\sqrt{d_j}\right) = \omega(x) $
for all $x \in C^*_r\left(\Gamma_{\varphi}\right)$. In particular,
$\omega(Q(x)) = \sum_j \omega\left(\sqrt{d_j}Q(x)\sqrt{d_j}\right)$
and hence
$\omega\left(P_{\Gamma_{\psi}}(Q(x))\right) = \omega(Q(x))$
 by Lemma 2.24 of \cite{Th} because $Q(x) \in C^*_r\left(R_{\psi}\right)$. Since $P_{\Gamma_{\psi}} \circ Q =
 P_{\Gamma_{\psi}}$ this shows that $\omega = \omega \circ
 P_{\Gamma_{\psi}}$ as desired.

The case $\beta < 0$ follows from the preceding case by observing that
$\omega$ is a $\beta$-KMS state for $\alpha^F$ if and only if $\omega$
is a $(-\beta)$-KMS state for $\alpha^{-F}$.
\end{proof}
\end{thm}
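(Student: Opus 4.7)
The plan is to transport the question to Exel's crossed product framework via Theorem \ref{exeliso} and then apply the bijection, already established by Exel in \cite{E}, between fixed states of a weighted transfer operator and gauge-invariant KMS states. Concretely, Proposition 9.2 together with Section 11 of \cite{E} characterizes, for $\beta > 0$, the gauge-invariant $\beta$-KMS states of $\alpha^F$ on the Exel crossed product $D \rtimes_{\alpha_\varphi, \mathcal L_\varphi} \mathbb N$ as being exactly the states of the form $\chi \circ P_{\Gamma_{\psi}}$ with $\chi \in S(D)$ satisfying $\chi \circ L_{-\beta F} = \chi$. Since Theorem \ref{exeliso} identifies that crossed product with $C^*_r(\Gamma_\varphi)$ while sending Exel's generating isometry $S$ to $V_\varphi$, so that the $S$-weighted gauge action of \cite{E} corresponds exactly to $\alpha^F$, the theorem reduces to the single assertion that \emph{every} gauge-invariant $\beta$-KMS state $\omega$ for $\alpha^F$ factorizes as $\omega = \omega \circ P_{\Gamma_\psi}$.

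To obtain the factorization, I would first use Lemma \ref{KMSgauge}, which yields $\omega = \omega \circ Q$, where $Q$ is the conditional expectation onto $C^*_r(\Gamma_\varphi)^{\mathbb T} = C^*_r(R_\psi)$. Because $P_{\Gamma_\psi} \circ Q = P_{\Gamma_\psi}$, it is then enough to show that $\omega$ and $\omega \circ P_{\Gamma_\psi}$ agree on $C^*_r(R_\psi)$. Exploiting that $D$ sits inside the $\alpha^F$-fixed point algebra, any partition of unity $\{d_j\} \subset D$ provides $\alpha^F$-analytic, $\alpha^F_{i\beta}$-invariant elements $\sqrt{d_j}$; inserting these into the KMS identity produces the ``diagonal averaging'' relation $\omega(z) = \sum_j \omega(\sqrt{d_j}\, z\, \sqrt{d_j})$ for all $z \in C^*_r(\Gamma_\varphi)$. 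Applying this for $z \in C^*_r(R_\psi)$ and invoking Lemma 2.24 of \cite{Th} — which I read as saying that any state on $C^*_r(R_\psi)$ which obeys this identity for every partition of unity in $D$ must be annihilated on the kernel of $P_{\Gamma_\psi}$ — gives $\omega(z) = \omega(P_{\Gamma_\psi}(z))$, completing the $\beta > 0$ case.

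For $\beta < 0$, I would invoke the standard sign symmetry: $\omega$ is a $\beta$-KMS state for $\alpha^F$ if and only if it is a $(-\beta)$-KMS state for $\alpha^{-F}$, while the fixed-point condition $\chi \circ L_{-\beta F} = \chi$ is preserved under the substitution $(\beta, F) \mapsto (-\beta, -F)$, as $L_{-\beta F}$ built from $F$ equals $L_{-(-\beta)(-F)}$ built from $-F$. Thus the negative-temperature statement follows at once from the positive-temperature statement applied to the opposite potential.

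The main obstacle is clearly the factorization $\omega = \omega \circ P_{\Gamma_\psi}$. Exel's bijection, taken off the shelf, characterizes only those KMS states that already factor through $P_{\Gamma_\psi}$; it does not in itself force an arbitrary gauge-invariant KMS state to be of that form. The missing input is a ``diagonal rigidity'' coming from the interplay between $\alpha^F$-fixed elements of $D$ and the conditional expectation $P_{\Gamma_\psi}$, and extracting it in the present generality — rather than in the better-behaved \'etale case — is precisely the role played by Lemma 2.24 of \cite{Th}. Identifying the correct reformulation of the KMS identity that matches the hypotheses of that lemma is where I expect the bulk of the technical effort to sit.
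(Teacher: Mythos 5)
Your proposal follows essentially the same route as the paper: reduce via Theorem \ref{exeliso} and Proposition 9.2 with Section 11 of \cite{E} to showing that every gauge-invariant $\beta$-KMS state factors through $P_{\Gamma_{\psi}}$, obtain this from the partition-of-unity identity $\omega(z)=\sum_j\omega(\sqrt{d_j}\,z\,\sqrt{d_j})$ applied to elements of $C^*_r(R_\psi)$ together with Lemma 2.24 of \cite{Th}, and handle $\beta<0$ by the $(\beta,F)\mapsto(-\beta,-F)$ symmetry. The only cosmetic difference is that $\omega=\omega\circ Q$ follows directly from the assumed gauge invariance of $\omega$ rather than from Lemma \ref{KMSgauge}; otherwise the argument is the paper's own.
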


It follows from \cite{E} that every $\beta$-KMS state
is gauge invariant when $F$ is strictly
positive or strictly negative. This is not the case in general, but note that if there
is a $\beta$-KMS state for $\alpha^F$ then
there is also one which is gauge invariant by Lemma \ref{KMSgauge}.

We have deliberately omitted $\beta = 0$ as an admissable
$\beta$-value for KMS-states because they correspond to trace states and they
exist only in rather exceptional cases, e.g. when $\varphi$ has a
fixed point $x_0$ for which $\varphi^{-1}(x_0) =
\left\{x_0\right\}$.

\subsection{Bounds on the possible $\beta$-values}

Define $I_{\beta F} : D \to D$ such that
$$
I_{\beta F}(g)(x) = \frac{e^{\beta F(x)}}{m(x)} g \circ \varphi(x) .
$$
Then $L_{-\beta F } \circ I_{\beta F}(g) = g$ for all $g \in D$, so if $\chi \in
S(D)$ satisfies (\ref{ooo}) we find that
\begin{equation}\label{oo1}
\chi = \chi \circ I_{\beta F } .
\end{equation}
Thus $1 \in \text{Spectrum}\left(L_{-\beta F}^*\right) \cap
\text{Spectrum}\left(I_{\beta F}^*\right)$ when there is a state $\chi
\in S(D)$ for which (\ref{ooo})
holds. Let
$\rho(T)$ be the spectral radius of an operator $T$. Since 
$$
\text{Spectrum}\left(L_{-\beta F}^*\right) \cap
\text{Spectrum}\left(I_{\beta F}^*\right) =
\text{Spectrum}\left(L_{-\beta F}\right) \cap
\text{Spectrum}\left(I_{\beta F}\right),
$$ 
cf. \cite{DS}, we
find that
\begin{equation}\label{oo2}
1 \leq \rho\left(I_{\beta F}\right) 
\end{equation}
and
\begin{equation}\label{oo3}
1 \leq \rho\left(L_{-\beta F}\right)
\end{equation}
when (\ref{ooo}) holds.

To get the most out of these inequalities we consider a
non-invertible invariant $h_m$ which has been introduced for general
dynamical systems by M. Hurley in \cite{Hu} and developed further in
\cite{FFN}. For a locally injective map like the map
$\varphi$ we consider here, the invariant $h_m(\varphi)$ is simply
given by the formula
\begin{equation}\label{vaek}
h_m(\varphi) = \lim_{n \to \infty} \frac{1}{n} \log\left(\max_{x \in X} \# \varphi^{-n}(x)\right),
\end{equation}
cf. \cite{FFN}, or, alternatively, as
$$
h_m(\varphi) = \sup_{x \in X} \limsup_n \frac{1}{n} \log \#
\varphi^{-n}(x) ,
$$
cf. Corollary 2.4 of \cite{FFN}. For forward expansive maps, and hence
in particular for one-sided subshifts, $h_m$
equals the topological entropy $h$, but in general we only have the
inequality $h_m(\varphi) \leq h(\varphi)$. It can easily happen that
$h_m(\varphi) < h(\varphi)$ even when $\varphi$ is a
local homeomorphism.

The next lemma shows that for a locally injective surjection, as the
map $\varphi$ we consider, the invariant $h_m$ agrees with that of its
canonical local homeomorphic extension.

\begin{lemma}\label{equalh} $h_m\left(\psi\right) =
  h_m(\varphi)$.
\begin{proof} It follows from (\ref{iotaX}) that $\#
  \psi^{-k}\left(\iota(x)\right) = \#\varphi^{-k}(x)$ for all $x \in
  X$. Since $\# \psi^{-k}(c) = \sum_{c' \in \psi^{-k}(c)} 1$ depends
  continuously on $c \in \widehat{D}$ and $\iota(X)$ is dense in $\widehat{D}$, we
  conclude that $\max_{c \in \widehat{D}}\# \psi^{-k}(c) = \max_{x \in
    X}\# \varphi^{-k}(x)$. Hence $h_m\left(\psi\right) =
  h_m(\varphi)$, cf. (\ref{vaek}).
\end{proof}
\end{lemma}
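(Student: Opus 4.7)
The plan is to apply formula (\ref{vaek}) separately to $\psi$ and to $\varphi$ and to show, for every $k \geq 1$, that
\[
\max_{c \in \widehat{D}} \#\psi^{-k}(c) \;=\; \max_{x \in X} \#\varphi^{-k}(x).
\]
Once this is established, taking $\frac{1}{k}\log$ of both sides and passing to the limit yields the claim directly.

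To get the equality of maxima, I would first check by induction on $k$ that $\#\psi^{-k}(\iota(x)) = \#\varphi^{-k}(x)$ for every $x \in X$. The base case $k=1$ is precisely (\ref{important}) from Lemma \ref{yes!!}. For the inductive step, (\ref{iotaX}) (or again (\ref{important}) applied pointwise) gives the disjoint decomposition
\[
\psi^{-k}(\iota(x)) \;=\; \bigsqcup_{z \in \varphi^{-1}(x)} \psi^{-(k-1)}(\iota(z)),
\]
and the inductive hypothesis converts the right-hand cardinality into $\sum_{z \in \varphi^{-1}(x)} \#\varphi^{-(k-1)}(z) = \#\varphi^{-k}(x)$. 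This identifies the functions $c \mapsto \#\psi^{-k}(c)$ and $x \mapsto \#\varphi^{-k}(x)$ on the dense subset $\iota(X) \subseteq \widehat{D}$.

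Next I would argue that $c \mapsto \#\psi^{-k}(c)$ is continuous on $\widehat{D}$. Since $\psi$ is a local homeomorphism on the compact Hausdorff space $\widehat{D}$ (Proposition \ref{prop}), so is $\psi^{k}$; for any fixed $c$ one can choose pairwise disjoint sheet neighborhoods $U_1,\dots,U_j$ of the finitely many points of $\psi^{-k}(c)$, and by compactness shrink the target to a neighborhood $V$ of $c$ whose full $\psi^{k}$-preimage is contained in $\bigcup_i U_i$. Since $\psi^{k}$ is injective on each $U_i$, the fiber count is constant equal to $j$ on $V$, so $\#\psi^{-k}(\cdot)$ is locally constant, hence continuous. (Alternatively, one may invoke Lemma \ref{N} and Corollary \ref{cor1} iteratively to exhibit it as a continuous element of $D$.)

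Combining the two previous steps with the density of $\iota(X)$ in $\widehat{D}$ gives $\max_{c \in \widehat{D}} \#\psi^{-k}(c) = \sup_{x \in X} \#\varphi^{-k}(x)$, and the right-hand supremum is attained since $\#\varphi^{-k}$ is bounded (by $M^k$ with $M = \max_x \#\varphi^{-1}(x)$). Substituting into (\ref{vaek}) then finishes the proof. The only point that requires any care is the continuity statement; everything else is either the density of $\iota(X)$ or bookkeeping on top of (\ref{important}), so I expect no real obstacle.
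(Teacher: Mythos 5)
Your proof is correct and takes essentially the same route as the paper's: identify $\#\psi^{-k}$ with $\#\varphi^{-k}$ on the dense subset $\iota(X)$ via (\ref{iotaX}), use continuity of the fibre-count function on $\widehat{D}$ to equate the maxima for each $k$, and conclude from (\ref{vaek}). You merely spell out the induction and the local-constancy argument that the paper leaves implicit, so no further changes are needed.
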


In the following we let $M(X)$ denote the set of Borel probability
measures on $X$ and $M_{\varphi}(X)$ the subset of $M(X)$
consisting of the $\varphi$-invariant elements of $M(X)$. Similarly,
we let $M(\widehat{D})$ be the set of Borel probability
measures on $\widehat{D}$ and $M_{\psi}(\widehat{D})$ the
set of $\psi$-invariant elements in $M(\widehat{D})$.

\begin{lemma}\label{bounds2} Let $\beta \in \mathbb R$ and assume that
  there is a state $\chi \in S(D)$ such that (\ref{ooo}) holds. It
  follows that there are measures $\nu,\nu' \in
M_{\psi}(\widehat{D})$ such that
\begin{equation}\label{upperbound}
\beta  \int_{\widehat{D}} F \ d\nu \leq h_m(\varphi)
\end{equation}
and 
\begin{equation}\label{lowerbound}
\int_{\widehat{D}} \log \# \psi^{-1}(c)  \ d\nu'(c) \leq \beta \int_{\widehat{D}} F \ d\nu' .
\end{equation}
\begin{proof} Let $\delta > 0$. It follows from (\ref{oo3}) that $\rho\left(L_{-\beta
      F}\right) \geq 1$ which implies that
\begin{equation*}
\begin{split}
&-\delta \leq \frac{1}{k} \log
\left\|L^k_{-\beta F}(1)\right\|_{\infty} = \frac{1}{k} \log \left(
\sup_{c \in \widehat{D}}  \sum_{c' \in \psi^{-k}(c)} e^{-\beta
    \left(\sum_{j=0}^{k-1} F\left(\psi^j(c')\right)\right)}\right) \\
  \end{split}
\end{equation*}
for all large $k$. There is therefore, for each large $k$, a point
$c_k \in \widehat{D}$ such that
$$
-2\delta \leq  \frac{1}{k} \log \left(e^{
    \sum_{j=0}^{k-1} -\beta F\left(\psi^j(c_k)\right)} \sup_{c \in \widehat{D}}
      \# \psi^{-k}(c) \right) .
$$
Let $\nu$ be a weak* condensation point of the sequence
$$
\frac{1}{k} \sum_{j=0}^{k-1} \delta_{\psi^j\left(c_k\right)}
$$
in $M(\widehat{D})$. Then $\nu \in M_{\psi}(\widehat{D})$ by Theorem 6.9 of
\cite{W1} and
$$
\frac{1}{k}\sum_{j=0}^{k-1} -\beta F\left(\psi^j(c_k)\right) \leq  \int_{\widehat{D}}
-\beta F \ d\nu + \delta
$$
for infinitely many $k$. It follows that
$$
-2\delta \leq \frac{1}{k} \log
\sup_{c \in \widehat{D}} \#
  \psi^{-k}(c) + \int_{\widehat{D}} -\beta F \ d\nu +
      \delta
$$
for infinitely many $k$, and we conclude therefore that $0 \leq h_m(\psi) +
\int_{\widehat{D}} - \beta F d\nu$. Since $h_m\left(\psi\right) =
h_m(\varphi)$ by Lemma \ref{equalh} we get (\ref{upperbound}).

Similarly
it follows from (\ref{oo2}) that 
$$
1 \leq \lim_{k \to \infty} \sup_{c \in \widehat{D}}
\left(\frac{e^{\beta \sum_{j=0}^{k-1}
    F\left(\psi^j(c)\right)}}{\prod_{j=0}^{k-1} m\left(\psi^j(c)\right)}
\right)^{\frac{1}{k}} ,
$$
which implies that
$$
-\delta \leq \frac{1}{k} \log \left( \sup_{c \in \widehat{D}} e^{
    \sum_{j=0}^{k-1} \beta F\left(\psi^j(c)\right) - \log
    m\left(\psi^j(c)\right)}\right)
$$
for all large $k$. We can then work as before with $-\beta F$
replaced by $\beta F - \log m$ to produce the measure $\nu' \in M_{\psi}(\widehat{D})$ such that
$-2\delta \leq \int_{\widehat{D}} \beta F - \log m \ d \nu' +  \delta$. We omit
the repetition. Since
$\nu'$ is $\psi$-invariant we have that $\int_{\widehat{D}} \log m \ d\nu' =
\int_{\widehat{D}} \log \# \psi^{-1}(c) \ d\nu'(c)$. In this way we get (\ref{lowerbound}).

\end{proof}
\end{lemma}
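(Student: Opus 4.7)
The plan is to exploit the spectral radius inequalities (\ref{oo2}) and (\ref{oo3}) by combining Gelfand's formula with a Krylov-Bogolyubov argument. Since $L_{-\beta F}$ and $I_{\beta F}$ are positivity-preserving operators on the unital commutative $C^*$-algebra $D = C(\widehat{D})$, their operator norms are attained on the unit, $\|T^k\|_\infty = \|T^k(1)\|_\infty$, so Gelfand's formula together with (\ref{oo2}) and (\ref{oo3}) gives $\limsup_k \tfrac{1}{k}\log\|L^k_{-\beta F}(1)\|_\infty \geq 0$ and the analogous bound for $I_{\beta F}$.

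The first step is to make the iterates explicit on $\widehat{D}$. Using Lemma \ref{yes!!}, the density of $\iota(X)$, and continuity, one checks that $L_{-\beta F}$ acts on $C(\widehat{D})$ as summation over $\psi$-preimages, so
\begin{equation*}
L^k_{-\beta F}(1)(c) \;=\; \sum_{c' \in \psi^{-k}(c)} \exp\Bigl(-\beta \sum_{j=0}^{k-1} F(\psi^j(c'))\Bigr),
\end{equation*}
while a direct induction gives
\begin{equation*}
I^k_{\beta F}(1)(c) \;=\; \frac{\exp\bigl(\beta \sum_{j=0}^{k-1} F(\psi^j(c))\bigr)}{\prod_{j=0}^{k-1} m(\psi^j(c))}.
\end{equation*}

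For (\ref{upperbound}), fix $\delta > 0$. Bounding the sum defining $L^k_{-\beta F}(1)(c)$ crudely by $\#\psi^{-k}(c)$ times its largest summand yields, for each large $k$, a point $c_k \in \widehat{D}$ with
\begin{equation*}
-2\delta \;\leq\; \frac{1}{k}\log \sup_{c}\#\psi^{-k}(c) \;+\; \frac{1}{k}\sum_{j=0}^{k-1} -\beta F(\psi^j(c_k)).
\end{equation*}
Let $\nu$ be any weak$^*$ accumulation point in $M(\widehat{D})$ of the empirical measures $\tfrac{1}{k}\sum_{j=0}^{k-1} \delta_{\psi^j(c_k)}$; by the Krylov-Bogolyubov argument (Theorem 6.9 of \cite{W1}) $\nu \in M_\psi(\widehat{D})$. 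Passing to a convergent subsequence and invoking (\ref{vaek}) and Lemma \ref{equalh} gives $\beta \int F \, d\nu \leq h_m(\varphi) + 2\delta$. The bound (\ref{lowerbound}) is obtained by the identical construction with $-\beta F$ replaced by the continuous function $\beta F - \log m$ and $L_{-\beta F}$ replaced by $I_{\beta F}$; the formula for $I^k_{\beta F}(1)$ above has no combinatorial prefactor, so no entropy term appears, and the resulting $\nu' \in M_\psi(\widehat{D})$ satisfies $\int(\beta F - \log m)\, d\nu' \geq 0$. Since $\nu'$ is $\psi$-invariant and $m = N\circ\psi$ on $\widehat{D}$, one has $\int \log m \, d\nu' = \int \log N \, d\nu'$, and Lemma \ref{N} rewrites this as $\int \log \#\psi^{-1}(c) \, d\nu'(c)$, giving (\ref{lowerbound}).

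I expect the main delicate point to be the bookkeeping of $\delta$: strictly, the measures $\nu, \nu'$ depend on $\delta$, so to remove the $2\delta$ slack and obtain the sharp bounds one must apply weak$^*$ compactness of $M_\psi(\widehat{D})$ a second time, extracting limits along $\delta_n \to 0$. A secondary technical point is justifying that the Ruelle-type operator $L_{-\beta F}$, originally defined via $\varphi$-preimages on $X$, genuinely corresponds on $\widehat{D}$ to summation over $\psi$-preimages weighted by $e^{-\beta F(c')}$; this is exactly what Lemma \ref{yes!!} together with (\ref{iotaX}) provides.
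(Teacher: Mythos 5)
Your proof is correct and follows essentially the same route as the paper: write the iterates of $L_{-\beta F}$ and $I_{\beta F}$ explicitly as sums/quotients over $\psi$-orbits, pick near-maximizing points $c_k$, and pass to $\psi$-invariant weak* limits of the empirical measures via Theorem 6.9 of \cite{W1}, using Lemma \ref{equalh} to replace $h_m(\psi)$ by $h_m(\varphi)$ and Lemma \ref{N} (with $\psi$-invariance) to rewrite $\int \log m \, d\nu'$. Your observation that $\nu,\nu'$ depend on $\delta$, so that removing the slack requires a second weak*-compactness extraction along $\delta_n \to 0$, is a legitimate refinement of a bookkeeping point the paper's proof passes over silently.
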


When $H : X \to \mathbb R$ is a bounded realvalued function, set $A^{\varphi}_H(k) =
\inf_{x \in X} \sum_{j=0}^{k-1}
  H\left(\varphi^j(x)\right)$. Then $A^{\varphi}_H(k+n) \geq A^{\varphi}_H(k) +
A^{\varphi}_H(n)$ for all $n,k$ and we can set
$$
A^{\varphi}_H = \lim_{k \to \infty} \frac{A^{\varphi}_H(k)}{k} = \sup_n \frac{A^{\varphi}_H(n)}{n} .
$$
Similarly, we set $B^{\varphi}_H(k) = \sup_{x \in X} \sum_{j=0}^{k-1}
  H\left(\varphi^j(x)\right)$ and
$$
B^{\varphi}_H = \lim_{k \to \infty} \frac{B^{\varphi}_H(k)}{k} =
\inf_n \frac{B^{\varphi}_H(n)}{n} .
$$

\begin{prop}\label{bounds} When $\beta >0$ is the inverse temperature of
  a KMS state for $\alpha^F$ we have that
$A^{\varphi}_{\log m} \leq \beta B^{\varphi}_F $
and
$\beta A^{\varphi}_F \leq h_m(\varphi).$

When $\beta < 0$ is the inverse temperature of
  a KMS state for $\alpha^F$ we have that
$A^{\varphi}_{\log m} \leq \beta A^{\varphi}_F$
and
$\beta B^{\varphi}_F \leq h_m(\varphi)$.
  
\begin{proof} Let $\nu$ and $\nu'$ be the measures from Theorem
  \ref{bounds2}. When $\beta >0$ we find that
$$
h_m(\varphi) \geq \beta \int_{\widehat{D}} F \ d \nu = \beta \frac{1}{n} \int_{\widehat{D}}
\sum_{k=0}^{n-1} F
\circ \psi^k \ d\nu \geq \beta \frac{A^{\varphi}_F(n)}{n}  
$$
and
\begin{equation*}
\frac{A^{\varphi}_{\log m}(n)}{n} \leq \int_{\widehat{D}} \frac{1}{n} \sum_{k=0}^{n-1} \log
m\circ \psi^k \ d \nu' \leq \beta \frac{1}{n} \int_{\widehat{D}}
\sum_{k=0}^{n-1} F
\circ \psi^k \ d\nu' \leq \beta \frac{B^{\varphi}_F(n)}{n}
\end{equation*}
for all $n$. The two first inequalities of Theorem \ref{bounds}
follow from this. The case $\beta < 0$ is handled similarly.
 
\end{proof}
\end{prop}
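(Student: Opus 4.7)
The plan is to invoke Lemma \ref{bounds2} in order to produce $\psi$-invariant Borel probability measures $\nu, \nu' \in M_\psi(\widehat{D})$ satisfying $\beta \int_{\widehat{D}} F \, d\nu \leq h_m(\varphi)$ and $\int_{\widehat{D}} \log \# \psi^{-1}(c) \, d\nu'(c) \leq \beta \int_{\widehat{D}} F \, d\nu'$; the hypothesis of Lemma \ref{bounds2} is met because Theorem \ref{KMS} (together with Lemma \ref{KMSgauge} to pass to a gauge-invariant KMS state, if necessary) provides a state $\chi \in S(D)$ with $\chi \circ L_{-\beta F} = \chi$. The task is then to upgrade these measure-theoretic bounds into the pointwise bounds involving the sub/super-additive limits $A^\varphi_H$ and $B^\varphi_H$.

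First I would make two bookkeeping observations. Using Corollary \ref{cor1} and the $\psi$-invariance of $\nu'$, one has $\int \log \# \psi^{-1}(c) \, d\nu'(c) = \int \log \# \psi^{-1}(\psi(c)) \, d\nu'(c) = \int \log m \, d\nu'$, so the second inequality of Lemma \ref{bounds2} becomes $\int \log m \, d\nu' \leq \beta \int F \, d\nu'$. Next, since every $H \in D$ is continuous on $\widehat{D}$, the Birkhoff sum $\sum_{k=0}^{n-1} H \circ \psi^k$ is continuous on $\widehat{D}$; as $\iota(X)$ is dense and $\iota$ is equivariant, its infimum and supremum over $\widehat{D}$ coincide with $A^\varphi_H(n)$ and $B^\varphi_H(n)$ respectively (this is the same continuity-plus-density argument used in Lemma \ref{equalh}).

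For $\beta > 0$, $\psi$-invariance of $\nu$ gives $\int F \, d\nu = \frac{1}{n} \int \sum_{k=0}^{n-1} F \circ \psi^k \, d\nu \geq A^\varphi_F(n)/n$, and multiplying by $\beta$ and combining with Lemma \ref{bounds2} yields $h_m(\varphi) \geq \beta A^\varphi_F(n)/n$; taking $\sup_n$ delivers $\beta A^\varphi_F \leq h_m(\varphi)$. The same averaging applied against $\nu'$ produces
\[ \frac{A^\varphi_{\log m}(n)}{n} \leq \int \log m \, d\nu' \leq \beta \int F \, d\nu' \leq \beta \frac{B^\varphi_F(n)}{n}, \]
and passing to the limit in $n$ gives $A^\varphi_{\log m} \leq \beta B^\varphi_F$.

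For $\beta < 0$ the multiplication by $\beta$ flips the direction of the Birkhoff bounds: $\int F \, d\nu \leq B^\varphi_F(n)/n$ gives $\beta \int F \, d\nu \geq \beta B^\varphi_F(n)/n$ and hence $h_m(\varphi) \geq \beta B^\varphi_F$, while $\int F \, d\nu' \geq A^\varphi_F(n)/n$ gives $\beta \int F \, d\nu' \leq \beta A^\varphi_F(n)/n$, which combined with $A^\varphi_{\log m}(n)/n \leq \int \log m \, d\nu' \leq \beta \int F \, d\nu'$ yields $A^\varphi_{\log m} \leq \beta A^\varphi_F$ in the limit. There is no serious obstacle here; the proof is essentially a mechanical conversion of the measure-theoretic bounds of Lemma \ref{bounds2} into deterministic ones via the identity $\int H \, d\mu = \frac{1}{n} \int \sum_{k=0}^{n-1} H \circ \psi^k \, d\mu$ for $\psi$-invariant $\mu$, and the only point demanding care is tracking the sign flips induced by a negative $\beta$.
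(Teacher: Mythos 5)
Your proof is correct and follows essentially the same route as the paper: it feeds the measures $\nu,\nu'$ from Lemma~\ref{bounds2} into the identity $\int H\,d\mu=\frac{1}{n}\int\sum_{k=0}^{n-1}H\circ\psi^k\,d\mu$ for $\psi$-invariant $\mu$ and bounds the Birkhoff sums by $A^{\varphi}_H(n)$ and $B^{\varphi}_H(n)$ before letting $n\to\infty$. The only difference is that you spell out steps the paper leaves implicit (obtaining $\chi$ via Lemma~\ref{KMSgauge} and Theorem~\ref{KMS}, the density-of-$\iota(X)$ argument identifying extrema over $\widehat{D}$ with those over $X$, and the sign bookkeeping for $\beta<0$), which is fine.
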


\begin{cor}\label{nonexist1} Assume that $h_m(\varphi) = 0$. There are
  no KMS states for $\alpha^F$ unless $A^{\varphi}_F \leq 0 \leq B^{\varphi}_F$.
\end{cor}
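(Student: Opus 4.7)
The plan is to deduce the corollary by direct substitution into the four inequalities of Proposition~\ref{bounds}, exploiting the simple observation that the function $\log m$ is nonnegative.

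First I would record the key sign observation: since $\varphi$ is surjective, every fiber $\varphi^{-1}(x)$ is nonempty, so $m(x) = \#\varphi^{-1}(\varphi(x)) \geq 1$ and hence $\log m \geq 0$ pointwise. Consequently, for every $n \geq 1$ the partial sum $\sum_{j=0}^{n-1} \log m(\varphi^j(x))$ is nonnegative, which gives $A^{\varphi}_{\log m}(n) \geq 0$ and therefore $A^{\varphi}_{\log m} \geq 0$ after passing to the limit.

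Next I would split into the two sign cases for $\beta$. Suppose first that $\beta > 0$ is the inverse temperature of a KMS state for $\alpha^F$. Proposition~\ref{bounds} gives $\beta A^{\varphi}_F \leq h_m(\varphi) = 0$, which forces $A^{\varphi}_F \leq 0$ since $\beta > 0$. The same proposition gives $A^{\varphi}_{\log m} \leq \beta B^{\varphi}_F$, and combining with $A^{\varphi}_{\log m} \geq 0$ yields $\beta B^{\varphi}_F \geq 0$, hence $B^{\varphi}_F \geq 0$. The case $\beta < 0$ is handled symmetrically using the other pair of inequalities from Proposition~\ref{bounds}: $\beta B^{\varphi}_F \leq 0$ together with $\beta < 0$ gives $B^{\varphi}_F \geq 0$, and $0 \leq A^{\varphi}_{\log m} \leq \beta A^{\varphi}_F$ with $\beta < 0$ gives $A^{\varphi}_F \leq 0$.

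There is no real obstacle here; the statement is a clean corollary of Proposition~\ref{bounds} once one notices $\log m \geq 0$. The only thing to be slightly careful about is that the value $\beta = 0$ has been excluded by convention (and indeed is explicitly ruled out in the discussion preceding the subsection), so we never need to divide by a zero $\beta$.
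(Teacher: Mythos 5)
Your argument is correct and is exactly the intended reading of the result: the paper states it without proof as an immediate consequence of Proposition~\ref{bounds}, and the only detail to supply is the one you supply, namely that $m \geq 1$ pointwise so $A^{\varphi}_{\log m} \geq 0$, after which the two sign cases for $\beta \in \mathbb R \setminus \{0\}$ give $A^{\varphi}_F \leq 0 \leq B^{\varphi}_F$ precisely as you write.
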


\begin{lemma}\label{hagain} Assume that there is a $\beta$-KMS state
  for $\alpha^F$. It follows that there is a measure $\nu \in
  M_{\psi}(\widehat{D})$ such that
\begin{equation}\label{jul8}
\beta \int_{\widehat{D}} F \ d \mu \geq \limsup_n \frac{1}{n} \log \inf_{c \in
  \widehat{D}} \# \psi^{-n}(c) .
\end{equation}
\begin{proof} Let $\chi \in S(D)$ be a state such that $\chi \circ
  L_{-\beta F} = \chi$. Then
\begin{equation}\label{later}
\chi\left( \sum_{c' \in \psi^{-k}(\cdot)} e^{-\beta \sum_{j=0}^{k-1}F \circ
    \psi^j(c')} \right) = 1
\end{equation}
for all $k$ and hence
\begin{equation}\label{jul5}
\inf_{c \in \widehat{D}} \# \psi^{-k}(c) \chi\left( \frac{1}{\#
    \psi^{-k}( \cdot)} \sum_{c' \in \psi^{-k}(\cdot)} e^{-\beta
    \sum_{j=0}^{k-1} F \circ
    \psi^j(c')} \right) \leq 1
\end{equation}
for all $k \in \mathbb N$. Since $\log$ is concave we can apply
Jensen's inequality to the state $\mu$ on $D$ defined by
$$
\mu(g) =  \chi\left( \frac{1}{\#
    \psi^{-k}( \cdot)} \sum_{c' \in \psi^{-k}(\cdot)} g(c')  \right) .
$$
Then (\ref{jul5}) gives the estimate
\begin{equation}\label{jul7}
\log \inf_{c \in \widehat{D}}  \# \psi^{-k}(c)  -\beta \mu\left( \sum_{j=0}^{k-1} F
  \circ \psi^j\right)  \leq 0
\end{equation}
for all $k$. We can therefore choose a condensation point $\nu \in M_{\psi}(\widehat{D})$ of the sequence $\mu_k, k = 1,2, \dots$, where
$$
\mu_k(g) = \mu\left( \frac{1}{k} \sum_{j=0}^{k-1} g \circ \psi^j
\right) ,
$$
such that (\ref{jul8}) holds.
\end{proof}
\end{lemma}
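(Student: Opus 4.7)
The plan is to turn the KMS hypothesis into an eigenvalue equation for the transfer operator, iterate that equation, apply Jensen's inequality to a cleverly chosen convex combination, and then extract a $\psi$-invariant measure by a Krylov--Bogolyubov argument. The starting point is Lemma \ref{KMSgauge} combined with Theorem \ref{KMS}: a $\beta$-KMS state for $\alpha^F$ produces, after averaging over the gauge if necessary, a state $\chi \in S(D)$ satisfying $\chi \circ L_{-\beta F} = \chi$. Iterating this eigenvalue relation and unwinding $L_{-\beta F}^k(1)$ yields, for every $k \in \mathbb N$,
$$
\chi\Bigl( \sum_{c' \in \psi^{-k}(\cdot)} e^{-\beta \sum_{j=0}^{k-1} F \circ \psi^j(c')} \Bigr) = 1.
$$

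The next step is to bring the fiber cardinality $\# \psi^{-k}(\cdot)$ out of the sum. Multiplying and dividing by this quantity, and using positivity of $\chi$ together with the pointwise bound $\# \psi^{-k}(c) \geq \inf_{c' \in \widehat{D}} \# \psi^{-k}(c')$, gives
$$
\inf_{c \in \widehat{D}} \# \psi^{-k}(c) \cdot \chi\Bigl( (\# \psi^{-k}(\cdot))^{-1} \sum_{c' \in \psi^{-k}(\cdot)} e^{-\beta \sum_{j=0}^{k-1} F \circ \psi^j(c')} \Bigr) \leq 1.
$$
For each $k$ the functional $\mu_{(k)}(g) := \chi\bigl((\# \psi^{-k}(\cdot))^{-1} \sum_{c' \in \psi^{-k}(\cdot)} g(c')\bigr)$ is a \emph{state} on $D$, since the inner averaging is a unital positive map. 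Jensen's inequality, i.e.\ concavity of $\log$ applied to $\mu_{(k)}$ and to the exponential, then yields after taking logarithms
$$
\log \inf_{c \in \widehat{D}} \# \psi^{-k}(c) \;\leq\; \beta \, \mu_{(k)}\Bigl( \sum_{j=0}^{k-1} F \circ \psi^j\Bigr).
$$

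The final step is a standard Krylov--Bogolyubov construction. Form the Cesàro averages $\mu_k(g) := \mu_{(k)}\bigl(\tfrac{1}{k} \sum_{j=0}^{k-1} g \circ \psi^j\bigr)$, so that the estimate above reads $\tfrac{1}{k} \log \inf_c \# \psi^{-k}(c) \leq \beta \mu_k(F)$. The sequence $\{\mu_k\}$ lies in the weak-$*$ compact set $M(\widehat{D})$, and any weak-$*$ condensation point $\nu$ is automatically $\psi$-invariant by Theorem 6.9 of \cite{W1}. Selecting a subsequence that simultaneously realizes $\mu_k \to \nu$ and the $\limsup$ on the left-hand side yields $\beta \int_{\widehat{D}} F \, d\nu \geq \limsup_n \tfrac{1}{n} \log \inf_c \# \psi^{-n}(c)$, which is the desired inequality. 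The one genuine subtlety is the design of $\mu_{(k)}$: without the weight $(\# \psi^{-k}(\cdot))^{-1}$ one does not obtain a probability measure, so Jensen's inequality would not apply --- producing the correct convex combination out of a naked sum over preimages is the pivotal trick.
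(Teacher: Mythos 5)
Your proposal is correct and follows essentially the same route as the paper: the eigenvalue relation $\chi\circ L_{-\beta F}=\chi$ iterated to give (\ref{later}), the extraction of $\inf_c\#\psi^{-k}(c)$, Jensen's inequality applied to the preimage-averaged state (your $\mu_{(k)}$, the paper's $\mu$), and a weak* condensation point of the Ces\`aro averages chosen along a subsequence realizing the $\limsup$. Your explicit remarks that $\mu$ depends on $k$ and that the weight $(\#\psi^{-k}(\cdot))^{-1}$ is what makes Jensen applicable are accurate clarifications of points the paper leaves implicit, but the argument is the same.
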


\begin{thm}\label{ultbound} Assume that $F$ is continuous and that there is a $\beta$-KMS state
  for $\alpha^F$. Set 
$$
m = \lim_{n \to \infty} \frac{1}{n}
  \log \left(\min_{x \in X} \# \varphi^{-k}(x)\right)
$$
 and 
$$
M = \lim_{n \to \infty}
  \frac{1}{n} \log \left(\max_{x \in X} \# \varphi^{-k}(x)\right).
$$
There is then a $\varphi$-invariant Borel probability measure $\mu \in
M_{\varphi}(X)$ such that
$$
\beta \int_X F \ d \mu \in [m,M].
$$
\begin{proof} By Proposition \ref{bounds} and Lemma \ref{hagain} there
  are measures $\nu, \nu' \in M_{\psi}(\widehat{D})$ such
  that $\beta \int_{\widehat{D}} F \ d \nu \leq M$ and $m \leq \beta
  \int_{\widehat{D}} F \ d \nu'$. Since $F$ is continuous on $X$ by
  assumption we have that $F(c) = F(\pi(c))$ for all $c \in
  \widehat{D}$. It follows that with an appropriate convex combination
$$
\mu = s \nu \circ \pi^{-1} + (1-s) \nu' \circ \pi^{-1}
$$
we have that $m \leq \beta \int_X F \ d \mu \leq M$.  
\end{proof}
\end{thm}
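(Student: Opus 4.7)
The plan is to combine the measure-theoretic upper bound from Lemma~\ref{bounds2} with the lower bound from Lemma~\ref{hagain}, and then transfer the resulting $\psi$-invariant measures on $\widehat{D}$ back to $\varphi$-invariant measures on $X$ via the canonical surjection $\pi$.

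First, since a $\beta$-KMS state exists, Lemma~\ref{KMSgauge} gives a gauge-invariant one, which by Theorem~\ref{KMS} corresponds to a state $\chi \in S(D)$ satisfying $\chi \circ L_{-\beta F} = \chi$. Lemma~\ref{bounds2} then produces $\nu \in M_{\psi}(\widehat{D})$ with $\beta \int_{\widehat{D}} F \, d\nu \leq h_m(\varphi) = M$, and Lemma~\ref{hagain} yields $\nu' \in M_{\psi}(\widehat{D})$ with $\beta \int_{\widehat{D}} F \, d\nu' \geq \limsup_n \frac{1}{n} \log \inf_{c \in \widehat{D}} \# \psi^{-n}(c)$. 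I would identify this limsup with $m$ by noting that $\#\psi^{-n}$ is locally constant on $\widehat{D}$ (since $\psi$ is a local homeomorphism with uniformly bounded fibers), hence continuous, while iterating~(\ref{iotaX}) gives $\#\psi^{-n}(\iota(x)) = \#\varphi^{-n}(x)$; density of $\iota(X)$ then forces $\min_{c \in \widehat{D}} \#\psi^{-n}(c) = \min_{x \in X} \#\varphi^{-n}(x)$, so the limsup equals $m$.

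Next I push both measures through $\pi$. Since $F \in C(X) \subseteq D$, viewing $F$ on $\widehat{D}$ gives $F(c) = F(\pi(c))$ by the defining property of $\pi$, so $\int_{\widehat{D}} F \, d\nu = \int_X F \, d(\nu \circ \pi^{-1})$ and similarly for $\nu'$. The equivariance $\pi \circ \psi = \varphi \circ \pi$ from Section~\ref{sec4} ensures that $\nu \circ \pi^{-1}$ and $\nu' \circ \pi^{-1}$ both lie in $M_{\varphi}(X)$.

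Finally, set $a = \beta \int_X F \, d(\nu \circ \pi^{-1}) \leq M$ and $b = \beta \int_X F \, d(\nu' \circ \pi^{-1}) \geq m$, and consider $\mu_s = s(\nu \circ \pi^{-1}) + (1-s)(\nu' \circ \pi^{-1})$ for $s \in [0,1]$. The map $s \mapsto \beta \int_X F \, d\mu_s = sa + (1-s)b$ is continuous, and by the intermediate value theorem its image is the closed interval with endpoints $a$ and $b$. A short case analysis using $a \leq M$, $b \geq m$, and $m \leq M$ shows that this interval always meets $[m,M]$, so an appropriate $s$ yields a $\mu = \mu_s$ with $\beta \int_X F \, d\mu \in [m,M]$. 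The main obstacle is really the transfer of the minimum preimage count from $\psi$ back to $\varphi$ and the case analysis ensuring the convex combination lands in $[m,M]$; every other step is a direct assembly of previously established machinery.
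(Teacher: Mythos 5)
Your argument is correct and follows essentially the same route as the paper: the upper-bound measure from Lemma~\ref{bounds2}/Proposition~\ref{bounds}, the lower-bound measure from Lemma~\ref{hagain}, pushforward along $\pi$ using $F = F\circ\pi$ and $\varphi\circ\pi = \pi\circ\psi$, and a convex combination. Your extra step identifying $\limsup_n \frac{1}{n}\log\inf_{c}\#\psi^{-n}(c)$ with $m$ via continuity of $c\mapsto \#\psi^{-n}(c)$ and density of $\iota(X)$ is exactly the argument of Lemma~\ref{equalh} applied to minima, a point the paper leaves implicit.
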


\subsection{Existence of  KMS states}

While Proposition \ref{bounds} and Theorem \ref{ultbound} give upper and lower bounds on the
possible $\beta$-values of a KMS state for $\alpha^F$ they say nothing
about existence. This is where the work of Matsumoto, Watatani and
Yoshida, \cite{MWY}, and
Pinzari, Watatani and Yonetani, \cite{PWY}, comes in.

\begin{thm}\label{exist2} (cf. \cite{PWY} and \cite{MWY}) Let $B$ be a unital commutative
  $C^*$-algebra and $L : B \to B$ a positive linear operator with
  spectral radius $\rho(L)$. Then $\rho(L)$ is in the spectrum of $L$
  and there is
  a state $\omega \in S(B)$ such that $\omega \circ L =
  \rho(L)\omega$.
\begin{proof}  We adopt arguments from
  \cite{PWY} to show that $\rho(L)$ is in the spectrum of
  $L$ and then arguments from \cite{MWY}
  to produce the state $\omega$.

Recall that $\text{Spectrum}(L) = \text{Spectrum}(L^*)$, cf. \cite{DS}. By definition of $\rho(L)$ there is an element $z \in
\text{Spectrum}\left(L^*\right)$ with $|z| = \rho(L)$. Let
$\{z_n\}$ be a sequence of complex numbers such that $|z_n| > \rho(L)$ for
all $n$ and $\lim_n z_n
= z$. It follows then from the principle of uniform boundedness that
there is an element $\mu \in B^*$ such that
$$
\lim_{n \to \infty} \left\|R(z_n)\mu\right\| = \infty
$$
where $R(z) = \left(z - L^*\right)^{-1}$ is the
resolvent. Since $B^*$ is spanned by the states we may assume that
$\mu \in S(B)$. Since $|z_n| > \rho\left(L^*\right)$ the
resolvent $R(z_n)$ is given by the norm convergent Neumann series
$$
R(z_n) = \sum_{k=0}^{\infty} z_n^{-k-1}{L^*}^k .
$$
Since $\mu$ is a state and $L$ a positive
operator it follows that
$$
\left|R(z_n)\mu\right| \leq \sum_{k=0}^{\infty}
\left|z_n\right|^{-k-1}{L^*}^k\mu = R\left(|z_n|\right)\mu
$$
in $B^*$ where $\left|R(z_n)\mu\right|$ is the total variation measure of $R(z_n)\mu$.  Hence
$$
\left\|R(z_n)\mu\right\| \leq
\left\|R\left(\left|z_n\right|\right)\mu\right\|,
$$
and we conclude that $\lim_{n \to \infty}
\left\|R\left(\left|z_n\right|\right)\mu\right\| = \infty$, which
implies that $\rho(L) = \lim_{n \to \infty} |z_n|$ is in
$\text{Spectrum}\left(L^*\right) = \text{Spectrum}(L)$.

Set 
$$
\mu_n =
\frac{R\left(|z_n|\right)\mu}{\left\|R\left(|z_n|\right)\mu\right\|} .
$$
A glance at the Neumann series shows that $\mu_n$ is a state since
$L$ is positive. As
$$
\left(\rho(L) - L^*\right)\mu_n = \left(\rho(L) -|z_n|\right)\mu_n +
\left\|R\left(|z_n|\right)\mu\right\|^{-1}\mu 
$$
converges to $0$ in norm, any weak* condensation point $\omega$ of
$\{\mu_n\}$ will be a state such that $\omega \circ L = \rho(L)\omega$.

\end{proof}
\end{thm}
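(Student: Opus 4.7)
My plan is to split the proof into two parts: first establishing that $\rho(L)$ lies in the spectrum of $L$, and then constructing an eigenstate via a normalized resolvent limit. The governing intuition is a Perron--Frobenius philosophy: positivity of $L$ forces its spectral radius to be attained in the spectrum, and the corresponding eigenvector can be realized by states because $B^*$ is spanned by states and the resolvent $R(z) = (z - L^*)^{-1}$ preserves positivity when $z$ is real and exceeds $\rho(L)$.

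For the first part, I would start from the definition of $\rho(L)$ and pick some $z \in \text{Spectrum}(L^*) = \text{Spectrum}(L)$ with $|z| = \rho(L)$ (using that the spectrum is a nonempty compact set whose elements are bounded by the spectral radius, and that its supremum is attained). Approximate $z$ from outside by complex numbers $z_n$ with $|z_n| > \rho(L)$ and $z_n \to z$. On the resolvent set, $R(z)$ depends continuously on $z$, and because $z$ is in the spectrum, the resolvent must blow up somewhere: by the uniform boundedness principle there exists $\mu \in B^*$ with $\|R(z_n)\mu\| \to \infty$. Since $B^*$ is the linear span of $S(B)$ (Jordan decomposition for functionals on a commutative $C^*$-algebra), I can arrange $\mu \in S(B)$ at the cost of replacing it by one of its positive pieces. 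For $|z_n| > \rho(L)$ the Neumann series $R(z_n) = \sum_{k \geq 0} z_n^{-k-1}(L^*)^k$ converges in norm, so termwise estimation together with positivity of $L^*$ on states gives $|R(z_n)\mu| \leq R(|z_n|)\mu$ in $B^*$. Consequently $\|R(|z_n|)\mu\| \to \infty$, which forces $|z_n| \to \rho(L)$ to be a limit of points where the resolvent is unbounded, hence $\rho(L) \in \text{Spectrum}(L^*) = \text{Spectrum}(L)$.

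For the second part, I would normalize and set $\mu_n = R(|z_n|)\mu / \|R(|z_n|)\mu\|$. Since the Neumann series is a sum of positive operators applied to the state $\mu$, $\mu_n$ is a positive functional of norm $1$, hence a state. Multiplying the resolvent identity $(|z_n| - L^*)R(|z_n|)\mu = \mu$ by $\|R(|z_n|)\mu\|^{-1}$ and rearranging gives
\[
(\rho(L) - L^*)\mu_n \;=\; (\rho(L) - |z_n|)\mu_n + \|R(|z_n|)\mu\|^{-1}\mu,
\]
and both summands tend to zero in norm because $|z_n| \to \rho(L)$ and $\|R(|z_n|)\mu\| \to \infty$. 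By weak* compactness of $S(B)$ the sequence $\{\mu_n\}$ has a condensation point $\omega \in S(B)$, and the continuity of $L^*$ in the weak* topology together with the norm convergence just displayed forces $(\rho(L) - L^*)\omega = 0$, i.e.\ $\omega \circ L = \rho(L)\omega$.

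The main obstacle I anticipate is ensuring that the functional $\mu$ extracted from the uniform boundedness argument can genuinely be chosen to be a state rather than merely a bounded functional, and then that the comparison $|R(z_n)\mu| \leq R(|z_n|)\mu$ is set up in the right framework (the total variation measure on the spectrum of $B$) so that the norm inequality $\|R(z_n)\mu\| \leq \|R(|z_n|)\mu\|$ is legitimate. Everything else is a clean mix of the Neumann series, positivity, and weak* compactness, but the correct interpretation of ``positivity'' on $B^*$ in terms of total variation, exploited via commutativity of $B$, is where the argument is most delicate.
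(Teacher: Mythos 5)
Your proposal is correct and follows essentially the same route as the paper's proof: the same reduction to $L^*$, the same uniform-boundedness/Neumann-series/positivity argument giving $\|R(|z_n|)\mu\|\to\infty$ and hence $\rho(L)\in\text{Spectrum}(L)$, and the same normalization $\mu_n = R(|z_n|)\mu/\|R(|z_n|)\mu\|$ with a weak* condensation point yielding the eigenstate. The points you flag as delicate (choosing $\mu$ to be a state via the decomposition of $B^*$, and the total-variation comparison $|R(z_n)\mu|\leq R(|z_n|)\mu$) are handled in exactly the same way in the paper.
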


\begin{cor}\label{exist} Let $\beta \in \mathbb R \backslash\{0\}$ satisfy that the
  spectral radius $\rho\left(L_{-\beta F}\right)$ of $L_{-\beta F}$ is
  $1$. It follows that there is a gauge invariant $\beta$-KMS state
  for $\alpha^F$.
\begin{proof}  Combine Theorem \ref{exist2} with Theorem \ref{KMS}.
\end{proof}
\end{cor}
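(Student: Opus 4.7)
The plan is to apply Theorem \ref{exist2} to manufacture a state on $D$ fixed by $L_{-\beta F}$, and then feed this state into the bijection of Theorem \ref{KMS} to obtain the gauge-invariant KMS state. No additional machinery should be required.

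First I would verify that Theorem \ref{exist2} is applicable to the pair $(D,L_{-\beta F})$. The algebra $D = D_{\Gamma_{\varphi}}$ is unital (it contains $1 = 1_X$) and commutative, being isomorphic to $C(\widehat{D})$. The operator $L_{-\beta F}$ defined by
\begin{equation*}
L_{-\beta F}(g)(x) = \sum_{y \in \varphi^{-1}(x)} e^{-\beta F(y)} g(y)
\end{equation*}
is a positive linear operator on $D$, since the weights $e^{-\beta F(y)}$ are strictly positive and $L_{-\beta F}$ sends nonnegative elements to nonnegative elements. Theorem \ref{exist2} then supplies a state $\chi \in S(D)$ such that $\chi \circ L_{-\beta F} = \rho\left(L_{-\beta F}\right)\chi$. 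Invoking the hypothesis $\rho\left(L_{-\beta F}\right) = 1$, this simplifies to $\chi \circ L_{-\beta F} = \chi$, i.e.\ $\chi$ satisfies (\ref{ooo}).

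Second, I would apply Theorem \ref{KMS} directly: it gives a bijection from the states satisfying $\chi \circ L_{-\beta F} = \chi$ onto the gauge-invariant $\beta$-KMS states for $\alpha^F$, the bijection being $\chi \mapsto \chi \circ P_{\Gamma_{\psi}}$. In particular, $\chi \circ P_{\Gamma_{\psi}}$ is the sought-after gauge-invariant $\beta$-KMS state for $\alpha^F$, and we are done.

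There is essentially no obstacle here, since both ingredients do all the work: the positive operator spectral-radius eigenstate theorem produces the measure satisfying the Ruelle-type fixed point equation, and the Exel-type correspondence converts it to a KMS state. The only small check worth noting is positivity of $L_{-\beta F}$, which is immediate from its explicit formula, together with the identification of $D$ as a unital commutative $C^*$-algebra so that Theorem \ref{exist2} genuinely applies.
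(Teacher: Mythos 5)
Your proposal is correct and is exactly the argument the paper intends: apply Theorem \ref{exist2} to the unital commutative algebra $D$ and the positive operator $L_{-\beta F}$ to obtain a state fixed by $L_{-\beta F}$ (using $\rho\left(L_{-\beta F}\right)=1$), and then pass through the bijection of Theorem \ref{KMS} to get the gauge-invariant $\beta$-KMS state. The paper's proof is the same combination, stated in one line.
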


\begin{lemma}\label{Dexist} Let $A \subseteq \widehat{D}$ be a closed subset such
  that $\psi^{-1}(A) \subseteq A$. Assume that
$$
A^{\psi}_{F|_A} > 0.
$$

It follows that there are states $\omega,\nu,\nu' \in S(D)$ and a $ \beta \in
[0,\infty)$ such that 
\begin{enumerate}
\item[1)] $\nu \circ \psi = \nu$, $\nu ' \circ \psi = \nu'$,
\item[2)] $\omega(A) = \nu(A) = \nu'(A) =1$, 
\item[3)] $\beta \nu(F) \leq \lim_{n \to \infty} \frac{1}{n} \log
  \left( \max_{c
    \in A} \# \psi^{-k}(c) \right)  \leq \beta \nu'(F)$, and 
\item[4)] $\omega \circ L_{-\beta F} = \omega$.
\end{enumerate}
\begin{proof} Set $\delta = A^{\psi}_{F|_A} = \lim_n \left(\inf_{c \in A} \frac{1}{n} \sum_{k=0}^{n-1}
F\left(\psi^k(c)\right)\right)$. Since
  $\psi^{-1}(A) \subseteq A$ we can for any $t \in \mathbb R$ define a
  positive linear operator
  $L^A_{-t F} : C(A) \to C(A)$ such that
$$
L^A_{-t F}(g)(c) = \sum_{c' \in \psi^{-1}(c)} e^{-t F(c')}
g(c') .
$$
Then 
\begin{equation}\label{racommute}
L^A_{-t F} \circ r_A = r_A \circ L_{-t F} 
\end{equation}
where $r_A : D \to C(A)$ is the restriction map.
To estimate
the spectral radius of $L^A_{-t F}$ we observe that when $t
\geq 0$ we get the estimate  
\begin{equation*}
\begin{split}
&\sup_{c \in A}\left(L^A_{-t F}\right)^n(1)(c) = \sup_{c \in A} \sum_{c' \in \psi^{-n}(c)} e^{-t
  \sum_{k=0}^{n-1} F\left(\psi^k(c')\right)}  \\
&\leq \sup_{c \in A} \sum_{c' \in
  \psi^{-n}(c)} e^{-n\frac{t \delta}{2}}  \leq e^{-n\frac{t \delta}{2}}\sup_{c \in
  A}\# \psi^{-n}(c) 
\end{split}
\end{equation*}
for infinitely many $n$. It follows that
$$
\lim_{t \to \infty} \rho\left(L^A_{-t F}\right) = \lim_{t
  \to \infty} \lim_{n \to \infty} \left(\sup_{c \in A}\left(L^A_{-t
      F}\right)^n(1)(c)\right)^{\frac{1}{n}} = 0.
$$
On the other hand 
$$
\rho\left(L^A_0\right) = \lim_{n \to \infty} \left( \sup_{c \in A} \#
  \psi^{-n}(c)\right)^{\frac{1}{n}} \geq 1 .
$$
Since
$$
\left| \rho\left(L^A_{-t F}\right) - \rho\left(L^A_{-t'
      F}\right)\right| \leq \left|t - t'\right|
\left\|F\right\|_{\infty}
$$
for all $t,t' \in \mathbb R$, cf. Proposition 2.2 of \cite{ABL}, it follows that
$[0,\infty) \ni t \mapsto \rho\left(L^A_{-t F}\right)$ is
continuous. Hence the intermediate value theorem of calculus implies the
existence of a $\beta \in [0,\infty)$ such that $\rho\left(L^A_{-\beta
    F}\right) = 1$. Then Theorem \ref{exist2} implies the existence of
a state $\omega' \in S\left(C(A)\right)$ such that $\omega' \circ
L^A_{-\beta F} = \omega'$. Set $\omega = \omega' \circ r_A$ and note
that (\ref{racommute}) implies that $\omega \circ L_{-\beta F} =
\omega$. Since $\omega(f) = 0$ for all $f \in D$ with support in $X
\backslash A$ it follows that $\omega(A) = 1$.

To construct the $\psi$-invariant states $\nu$ and $\nu'$ let $\epsilon > 0$ and note that
\begin{equation}\label{cruxest}
\lim_{n \to \infty}  \frac{1}{n} \log \left( \sup_{c \in A} \sum_{c' \in
    \psi^{-n}(c)} 
  e^{-\beta \left(\sum_{k=0}^{n-1} F\left(\psi^k(c')\right)\right) }
\right) = 0 .
\end{equation}
For $n \in \mathbb N$ there are $c_n,c'_n
\in \psi^{-n}(A)$ such that
$$
\sum_{k=0}^{n-1} F\left(\psi^k(c_n)\right) = \inf_{c' \in
  \psi^{-n}(A)} \sum_{k=0}^{n-1} F\left(\psi^k(c')\right) \leq
\sup_{c' \in
  \psi^{-n}(A)} \sum_{k=0}^{n-1} F\left(\psi^k(c')\right) =\sum_{k=0}^{n-1} F\left(\psi^k(c'_n)\right).
$$
Then
\begin{equation}\label{jul2}
\begin{split}
&-\beta \frac{1}{n}\sum_{k=0}^{n-1} F\left(\psi^k(c'_n)\right) +
\frac{1}{n} \log \sup_{c \in A} \# \psi^{-n}(c)  \leq 0 \\
&\leq -\beta \frac{1}{n}\sum_{k=0}^{n-1} F\left(\psi^k(c_n)\right) +
\frac{1}{n} \log \sup_{c \in A} \# \psi^{-n}(c)
\end{split}
\end{equation}
for all $n$. Let $\nu$ and $\nu'$ be states of $D$ such that the corresponding
measures on $\widehat{D}$ are weak* condensation points of the sequences
$\frac{1}{n}\sum_{k=0}^{n-1}\delta_{\psi^k\left(c_{n}\right)}$ and $\frac{1}{n}\sum_{k=0}^{n-1}\delta_{\psi^k\left(c'_{n}\right)}, \  =
 1,2,3, \dots$,
respectively. Then 1) holds by Theorem 6.9 of \cite{W1} and $\nu(A) =
\nu'(A) =1$ since $A$ is closed and
$\psi^k\left(c_n\right), \psi^k\left(c'_n\right) \in A$ for all
$k,n$. The estimates 3) follow from (\ref{jul2}).

\end{proof}
\end{lemma}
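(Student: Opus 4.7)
The plan is to exploit the invariance $\psi^{-1}(A)\subseteq A$ to restrict $L_{-tF}$ to an operator on $C(A)$ and then locate $\beta$ by applying the intermediate value theorem to the spectral radius as a function of $t$.

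First, I would introduce the restricted operator $L^A_{-tF}: C(A)\to C(A)$ defined by $L^A_{-tF}(g)(c)=\sum_{c'\in\psi^{-1}(c)}e^{-tF(c')}g(c')$, which is well defined because $\psi^{-1}(A)\subseteq A$, and note that it intertwines with the restriction map $r_A:D\to C(A)$, i.e.\ $L^A_{-tF}\circ r_A = r_A\circ L_{-tF}$. Next I would estimate $\rho(L^A_{-tF})$ as a function of $t\geq 0$. Setting $\delta = A^{\psi}_{F|_A}>0$, for every large $n$ one has $\sum_{k=0}^{n-1}F(\psi^k(c'))\geq n\delta/2$ uniformly for $c'\in A$, and hence $(L^A_{-tF})^n(1)(c)\leq e^{-nt\delta/2}\sup_{c\in A}\#\psi^{-n}(c)$; this forces $\rho(L^A_{-tF})\to 0$ as $t\to\infty$. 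On the other hand $\rho(L^A_0)=\lim_n(\sup_{c\in A}\#\psi^{-n}(c))^{1/n}\geq 1$. Since Proposition 2.2 of \cite{ABL} yields the Lipschitz bound $|\rho(L^A_{-tF})-\rho(L^A_{-t'F})|\leq|t-t'|\|F\|_\infty$, the intermediate value theorem produces $\beta\in[0,\infty)$ with $\rho(L^A_{-\beta F})=1$.

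Applying Theorem \ref{exist2} to $L^A_{-\beta F}$ gives a state $\omega'\in S(C(A))$ with $\omega'\circ L^A_{-\beta F}=\omega'$, and then $\omega:=\omega'\circ r_A\in S(D)$ satisfies $\omega\circ L_{-\beta F}=\omega$ by the intertwining relation, establishing 4). Since $\omega$ annihilates functions with support in $\widehat{D}\setminus A$ we have $\omega(A)=1$.

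For the invariant states $\nu,\nu'$ of 1)-3), I would run a Krylov–Bogoliubov construction tailored to the two sides of the inequality. From $\rho(L^A_{-\beta F})=1$ and the spectral radius formula we get
\begin{equation*}
\lim_{n\to\infty}\frac{1}{n}\log\Bigl(\sup_{c\in A}\sum_{c'\in\psi^{-n}(c)}e^{-\beta\sum_{k=0}^{n-1}F(\psi^k(c'))}\Bigr)=0.
\end{equation*}
For each $n$, pick points $c_n,c'_n\in\psi^{-n}(A)$ that realise the infimum and supremum, respectively, of the Birkhoff sum $\sum_{k=0}^{n-1}F\circ\psi^k$ over $\psi^{-n}(A)$; combining with $\sum_{c'\in\psi^{-n}(c)}e^{-\beta\sum F\circ\psi^k(c')}$ bracketed by $(\sup_{c\in A}\#\psi^{-n}(c))\,e^{-\beta\sum F\circ\psi^k(c_n)}$ above and $e^{-\beta\sum F\circ\psi^k(c'_n)}$ below gives the two-sided comparison needed. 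Letting $\nu$ and $\nu'$ be weak-$*$ condensation points of the Cesàro averages $\frac{1}{n}\sum_{k=0}^{n-1}\delta_{\psi^k(c_n)}$ and $\frac{1}{n}\sum_{k=0}^{n-1}\delta_{\psi^k(c'_n)}$, Theorem 6.9 of \cite{W1} gives $\psi$-invariance, while closedness of $A$ and $\psi^k(c_n),\psi^k(c'_n)\in A$ give $\nu(A)=\nu'(A)=1$; passing to the limit in the two bracketing estimates yields 3).

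The main obstacle is the comparison step producing 3): one has to choose the Birkhoff-extremising points so that both directions of the inequality fall out of the single spectral radius identity $\rho(L^A_{-\beta F})=1$, and one has to ensure that the orbits of these points, being in $A$, cause the limiting measures to be concentrated on $A$. The continuity of $t\mapsto\rho(L^A_{-tF})$ is essentially immediate from \cite{ABL}, and the rest is a routine use of Theorem \ref{exist2}.
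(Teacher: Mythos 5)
Your proposal is correct and follows essentially the same route as the paper: restrict $L_{-tF}$ to $C(A)$ via $\psi^{-1}(A)\subseteq A$, squeeze the spectral radius between $t=0$ and $t\to\infty$ using $\delta=A^{\psi}_{F|_A}>0$, apply the Lipschitz estimate from \cite{ABL} and the intermediate value theorem to find $\beta$ with $\rho(L^A_{-\beta F})=1$, feed this into Theorem \ref{exist2} and pull back by $r_A$ for $\omega$, and then run the Krylov--Bogoliubov argument on Birkhoff-extremising points $c_n,c'_n\in\psi^{-n}(A)$ exactly as in the paper. The only slip is in your stated lower bracketing: the bound $e^{-\beta\sum_{k}F(\psi^k(c'_n))}$ alone does not yield the inequality $\lim_n\frac{1}{n}\log\max_{c\in A}\#\psi^{-n}(c)\leq\beta\nu'(F)$; since $c'_n$ maximises the Birkhoff sum over all of $\psi^{-n}(A)$, every summand in any fibre is at least $e^{-\beta\sum_k F(\psi^k(c'_n))}$ (for $\beta\geq 0$), so evaluating on a fibre of maximal cardinality gives the needed lower bound $\left(\max_{c\in A}\#\psi^{-n}(c)\right)e^{-\beta\sum_k F(\psi^k(c'_n))}$, which is precisely the paper's estimate (\ref{jul2}).
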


\begin{thm}\label{existTHM} Assume that $h_m(\varphi) > 0$. 

\begin{enumerate}
\item[1)] If $A^{\varphi}_F > 0$ there is a
  $\beta$-KMS state for $\alpha^F$ such that $\beta A_F^{\varphi} \leq
  h_m(\varphi) \leq \beta B_F^{\varphi}$. 
\item[2)] If $B^{\varphi}_F < 0$ there is a
  $\beta$-KMS state for $\alpha^F$ such that $\beta B_F^{\varphi} \leq
  h_m(\varphi) \leq \beta A_F^{\varphi}$. 
\item[3)] When $F$ is continuous there is in both cases, 1) or 2), a
  $\varphi$-invariant Borel probability measure $\mu \in
  M_{\varphi}(X)$ such that 
\begin{equation}
\beta \int_X F \ d \mu = h_m(\varphi) .
\end{equation}
\end{enumerate}
 \begin{proof} 1) follows directly from Lemma
  \ref{Dexist} applied with $A = \widehat{D}$ and 2) follows
  by applying 1) to $-F$. 

3): Assume now that $F$ is continuous. Since we either have that $\beta A_F^{\varphi} \leq
  h_m(\varphi) \leq \beta B_F^{\varphi}$ or $\beta B_F^{\varphi} \leq
  h_m(\varphi) \leq \beta A_F^{\varphi}$ there is a sequence $n_1 < n_2 <
  \dots$ in $\mathbb N$ and points $x_i,y_i \in X$ such that
$$
h_m(\varphi) - \frac{1}{i} \leq \frac{1}{n_i} \beta\sum_{j=0}^{n_i -1} F \circ
\varphi^j\left(x_i\right)
$$
and
$$
\frac{1}{n_i}\beta \sum_{j=0}^{n_i -1} F \circ
\varphi^j\left(y_i\right) \leq h_m(\varphi) + \frac{1}{i}
$$
for all $i$. For each $i$ we can then find a number $s_i \in [0,1]$
such that 
\begin{equation}\label{thanks}
h_m(\varphi) - \frac{1}{i} \leq \frac{1}{n_i} \beta\sum_{j=0}^{n_i -1} \int_X F \circ
\varphi^j \ d \nu_i \leq h_m(\varphi) + \frac{1}{i}, 
\end{equation}
where $\nu_i = s_i \delta_{x_i} + (1-s_i) \delta_{y_i}$. Any weak*
condensation point of the sequence 
$$
\frac{1}{n_i} \sum_{j=0}^{n_i-1} \nu_i \circ \varphi^{-j} 
$$
will be $\varphi$-invariant by Theorem 6.9 of \cite{W1} and $\beta
\int_X F  \ d \mu = h_m(\varphi) $ thanks to (\ref{thanks}).

\end{proof}
\end{thm}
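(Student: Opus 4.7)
For (1), the plan is to apply Lemma \ref{Dexist} with the closed subset $A = \widehat{D}$. The inclusion $\psi^{-1}(\widehat{D}) \subseteq \widehat{D}$ is automatic, and the hypothesis $A^{\psi}_{F|_{\widehat{D}}} > 0$ reduces to the assumption $A^{\varphi}_F > 0$: the continuous function $c \mapsto \tfrac{1}{n} \sum_{j=0}^{n-1} F(\psi^j(c))$ on the compact space $\widehat{D}$ has the same infimum as its restriction to the dense subset $\iota(X)$, and by equivariance of $\iota$ that restriction equals $x \mapsto \tfrac{1}{n} \sum_{j=0}^{n-1} F(\varphi^j(x))$. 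The lemma then produces a $\beta \in [0,\infty)$, $\psi$-invariant states $\nu, \nu' \in S(D)$, and a state $\omega \in S(D)$ with $\omega \circ L_{-\beta F} = \omega$ and $\beta \nu(F) \leq h_m(\psi) \leq \beta \nu'(F)$; by Lemma \ref{equalh} the middle quantity equals $h_m(\varphi) > 0$, which at once excludes $\beta = 0$ and forces $\beta > 0$. Using $\psi$-invariance I rewrite $\nu(F) = \tfrac{1}{n} \int \sum_{j=0}^{n-1} F \circ \psi^j\, d\nu \geq A^{\psi}_{F}(n)/n$ and pass to the limit to get $\nu(F) \geq A^{\varphi}_F$, and symmetrically $\nu'(F) \leq B^{\varphi}_F$. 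Feeding $\omega$ into Theorem \ref{KMS} yields the desired gauge-invariant $\beta$-KMS state $\omega \circ P_{\Gamma_{\psi}}$ for $\alpha^F$, and the sandwich becomes $\beta A^{\varphi}_F \leq h_m(\varphi) \leq \beta B^{\varphi}_F$.

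For (2), the plan is to apply (1) with $-F$ in place of $F$: the hypothesis $B^{\varphi}_F < 0$ is exactly $A^{\varphi}_{-F} > 0$, so (1) delivers a positive $\beta_0 > 0$ and a $\beta_0$-KMS state for $\alpha^{-F}$ with $\beta_0 A^{\varphi}_{-F} \leq h_m(\varphi) \leq \beta_0 B^{\varphi}_{-F}$. A $\beta_0$-KMS state for $\alpha^{-F}$ is the same object as a $(-\beta_0)$-KMS state for $\alpha^F$, and the identities $A^{\varphi}_{-F} = -B^{\varphi}_F$, $B^{\varphi}_{-F} = -A^{\varphi}_F$ rearrange the bounds to the assertion of (2) with $\beta = -\beta_0 < 0$.

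For (3), assume $F$ is continuous. In both cases (1) and (2) the number $h_m(\varphi)/\beta$ is wedged between $A^{\varphi}_F$ and $B^{\varphi}_F$ (in one order or the other). From the defining formulas of $A^{\varphi}_F$ and $B^{\varphi}_F$ I can select $n_1 < n_2 < \cdots$ and points $x_i, y_i \in X$ whose $n_i$-th Birkhoff averages of $\beta F$ straddle $h_m(\varphi)$ to within $1/i$. The continuous function $s \mapsto \tfrac{\beta}{n_i} \sum_{j=0}^{n_i - 1} \int_X F \circ \varphi^j\, d(s \delta_{x_i} + (1-s)\delta_{y_i})$ on $[0,1]$ then attains a value within $1/i$ of $h_m(\varphi)$ at some $s_i \in [0,1]$ by the intermediate value theorem. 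Setting $\nu_i = s_i \delta_{x_i} + (1-s_i)\delta_{y_i}$ and taking a weak-$*$ condensation point of $\tfrac{1}{n_i}\sum_{j=0}^{n_i - 1} \nu_i \circ \varphi^{-j}$ produces, via Theorem 6.9 of \cite{W1}, a $\varphi$-invariant Borel probability measure $\mu$ with $\beta \int_X F\, d\mu = h_m(\varphi)$.

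The main obstacle I anticipate is the careful translation between quantities defined on $(X,\varphi)$, namely $A^{\varphi}_F$, $B^{\varphi}_F$ and $h_m(\varphi)$, and the corresponding quantities on the extension $(\widehat{D},\psi)$ which appear in Lemma \ref{Dexist} and in the states $\nu$, $\nu'$. This translation rests on two ingredients that must be used in concert: continuity of $F$ as an element of $D = C(\widehat{D})$ together with density of $\iota(X)$ in $\widehat{D}$ (which equates infima and suprema of Birkhoff-type sums taken over $\widehat{D}$ with those taken over $X$), and Lemma \ref{equalh} identifying $h_m(\psi) = h_m(\varphi)$. Once those identifications are in place, the remaining steps are essentially assembly of Lemma \ref{Dexist}, Theorem \ref{KMS}, and standard ergodic-theoretic averaging from \cite{W1}.
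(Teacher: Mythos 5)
Your proposal is correct and follows essentially the same route as the paper: part 1) is Lemma \ref{Dexist} applied with $A = \widehat{D}$ together with Theorem \ref{KMS} and Lemma \ref{equalh}, part 2) is obtained by applying 1) to $-F$, and part 3) is the same convex-combination and weak* condensation argument via Theorem 6.9 of \cite{W1}. The only additions are helpful elaborations the paper leaves implicit, namely that $h_m(\varphi) > 0$ rules out $\beta = 0$ and that density of $\iota(X)$ identifies $A^{\psi}_F$ with $A^{\varphi}_F$.
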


\begin{cor}\label{compare} Assume that $F$ is continuous and either
  strictly positive or strictly negative. There is no KMS-state for
  $\alpha^F$ if $h_m(\varphi) = 0$. If $h_m(\varphi) > 0$ there is a
  $\beta$-KMS-state for $\alpha^F$ such that
$$
\beta = \frac{h_m(\varphi)}{\int_X F \ d \mu}
$$
for some $\mu \in M_{\varphi}(X)$. 
\end{cor}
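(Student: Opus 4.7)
The plan is to combine Corollary \ref{nonexist1} (for the non-existence half) with Theorem \ref{existTHM} (for the existence half), and to observe that the strict sign condition on $F$, combined with compactness of $X$, forces $\int_X F\,d\mu$ to be bounded away from $0$ for \emph{every} probability measure $\mu$. This last fact is exactly what is needed both to check the hypotheses of Corollary \ref{nonexist1} and Theorem \ref{existTHM}, and to invert the identity $\beta\int_X F\,d\mu = h_m(\varphi)$ into the formula displayed in the corollary.

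For the non-existence claim, assume $h_m(\varphi) = 0$. Since $X$ is compact and $F$ is continuous, if $F$ is strictly positive there is a constant $c > 0$ with $F \geq c$, hence $A^{\varphi}_F \geq c > 0$; symmetrically, if $F$ is strictly negative then $B^{\varphi}_F \leq -c < 0$. In either case the condition $A^{\varphi}_F \leq 0 \leq B^{\varphi}_F$ demanded by Corollary \ref{nonexist1} fails, so there can be no KMS state for $\alpha^F$.

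For the existence claim, assume $h_m(\varphi) > 0$. The same compactness bounds yield $A^{\varphi}_F > 0$ if $F > 0$ and $B^{\varphi}_F < 0$ if $F < 0$, so exactly one of the hypotheses of parts 1) and 2) of Theorem \ref{existTHM} is satisfied. Applying the appropriate part produces a $\beta$-KMS state for $\alpha^F$, and since $F$ is continuous, part 3) of the same theorem provides a $\varphi$-invariant Borel probability measure $\mu \in M_{\varphi}(X)$ with $\beta \int_X F\,d\mu = h_m(\varphi)$. Finally, from $F \geq c$ (or $F \leq -c$) we get $\int_X F\,d\mu \geq c$ (or $\leq -c$), so in particular $\int_X F\,d\mu \neq 0$ and one may divide to obtain $\beta = h_m(\varphi)/\int_X F\,d\mu$. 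The only step that is not immediate bookkeeping is the invocation of Theorem \ref{existTHM} itself; the hard work has already been carried out there, and the role of the strict sign condition here is purely to certify the hypotheses of the two earlier results and to ensure the denominator in the final formula does not vanish.
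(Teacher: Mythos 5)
Your argument is correct and is essentially the paper's own proof, which simply cites Corollary~\ref{nonexist1} (the reference to Corollary~\ref{cor1} in the printed proof is evidently a slip) for the non-existence claim and Theorem~\ref{existTHM} for the existence claim. Your spelled-out verification—that strict positivity (or negativity) of a continuous $F$ on compact $X$ gives $A^{\varphi}_F>0$ (resp.\ $B^{\varphi}_F<0$), so that the hypotheses of those two results hold and $\int_X F\,d\mu$ is bounded away from $0$, permitting division in $\beta\int_X F\,d\mu=h_m(\varphi)$—is exactly the intended bookkeeping.
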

\begin{proof} The first assertion follows from Corollary \ref{cor1}
  and the second from Theorem \ref{existTHM}.
\end{proof}

\begin{example}\label{constto1} Assume that $\varphi : X \to X$ is
  uniformly $n$-to-$1$, i.e. that $\# \varphi^{-1}(x) = n$ for all $x
  \in X$. Note that $n \geq 2$ since we assume that $\varphi$ is not
  injective. Then $h_m(\varphi) = \log n$ and it follows from Theorem
  \ref{existTHM} and Theorem \ref{ultbound} that there is exactly one
  $\beta$ such that the gauge action on
  $C^*_r\left(\Gamma_{\varphi}\right)$ has a $\beta$-KMS state, namely
  $\beta = \log n$. In many cases $\log n$ is also the topological
  entropy $h(\varphi)$. This is for example the case when $\varphi$ is
  an affine map on $\mathbb T^k$. To see that in general $\log n$ is
  smaller than the topological entropy let $f : Y \to Y$ be an
  arbitrary homeomorphism of a compact metric space $Y$. Then $\varphi
  \times f : X \times Y \to X\times Y$ is also locally injective and
  $n$-to-$1$. In particular
  $h_m\left(\varphi \times f\right) = \log n$ while the topological
  entropy is $h(\varphi) + h(f)$ which can be any number $\geq \log n$.
  \end{example}

\end{document}